\nonstopmode \numberwithin{equation}{section}
\newtheorem{thm}{Theorem}
\newtheorem{lem}{Lemma}
\newtheorem{cor}{Corollary}[section]
\newtheorem{cl}{Claim}[section]
\newtheorem{ca}{Case}
\newtheorem{sca}{Subcase}
\newtheorem{scl}{Subclaim}
\newtheorem{conj}{Conjecture}
\theoremstyle{definition}
\newtheorem{defn}{Definition}
\newtheorem{example}{Example}[section]
\newtheorem{op}[equation]{Open Problem}
\newtheorem{ques}[equation]{Question}
\newtheorem{rem}{Remark}[section]
\newtheorem{exam}[equation]{Example}
\newcounter {own}
\def\theown {\thesection       .\arabic{own}}
\newenvironment{pf}[1][]{%
 \vskip 3mm
 \noindent
 \ifthenelse{\equal{#1}{}}%
  {{\slshape Proof. }}%
  {{\slshape #1.} }%
 }%
{\qed\bigskip}
\newcounter{alphabet}
\newcounter{tmp}
\newenvironment{Thm}[1][]{\refstepcounter{alphabet}%
\bigskip%
\noindent%
{\bf Theorem \Alph{alphabet}}%
\ifthenelse{\equal{#1}{}}{}{ (#1)}%
{\bf .} \itshape}{\vskip 8pt}
\newcommand{\Ref}[1]{\@ifundefined{r@#1}{}{\setcounter{tmp}{\ref{#1}}\Alph{tmp}}}
\newcommand{\eit}{{e^{i\theta}}}
\def\be{\begin{equation}}
\def\ee{\end{equation}}
\newcommand{\bee}{\begin{enumerate}}
\newcommand{\eee}{\end{enumerate}}
\newcommand{\blem}{\begin{lem}}
\newcommand{\elem}{\end{lem}}
\newcommand{\bthm}{\begin{thm}}
\newcommand{\ethm}{\end{thm}}
\newcommand{\bcor}{\begin{cor}}
\newcommand{\ecor}{\end{cor}}
\newcommand{\beg}{\begin{exam}}
\newcommand{\eeg}{\end{exam}}
\newcommand{\begs}{\begin{examples}}
\newcommand{\eegs}{\end{examples}}
\newcommand{\bdefe}{\begin{defn}}
\newcommand{\edefe}{\end{defn}}
\newcommand{\bprob}{\begin{prob}}
\newcommand{\eprob}{\end{prob}}
\newcommand{\bques}{\begin{ques}}
\newcommand{\eques}{\end{ques}}
\newcommand{\bei}{\begin{itemize}}
\newcommand{\eei}{\end{itemize}}
\newcommand{\bcon}{\begin{conj}}
\newcommand{\econ}{\end{conj}}
\newcommand{\bop}{\begin{op}}
\newcommand{\eop}{\end{op}}
\newcommand{\bca}{\begin{ca}}
\newcommand{\eca}{\end{ca}}
\newcommand{\bsca}{\begin{sca}}
\newcommand{\esca}{\end{sca}}
\newcommand{\bcl}{\begin{cl}}
\newcommand{\ecl}{\end{cl}}
\newcommand{\bscl}{\begin{scl}}
\newcommand{\escl}{\end{scl}}
\newcommand{\bcons}{\begin{conjs}}
\newcommand{\econs}{\end{conjs}}
\newcommand{\bprop}{\begin{propo}}
\newcommand{\eprop}{\end{propo}}
\newcommand{\br}{\begin{rem}}
\newcommand{\er}{\end{rem}}
\newcommand{\brs}{\begin{rems}}
\newcommand{\ers}{\end{rems}}
\newcommand{\bo}{\begin{obser}}
\newcommand{\eo}{\end{obser}}
\newcommand{\bos}{\begin{obsers}}
\newcommand{\eos}{\end{obsers}}
\newcommand{\bpf}{\begin{pf}}
\newcommand{\epf}{\end{pf}}
\newcommand{\ba}{\begin{array}}
\newcommand{\ea}{\end{array}}
\newcommand{\beq}{\begin{eqnarray}}
\newcommand{\beqq}{\begin{eqnarray*}}
\newcommand{\eeq}{\end{eqnarray}}
\newcommand{\eeqq}{\end{eqnarray*}}
\newcounter{minutes}\setcounter{minutes}{\time}
\newcounter{hours}\setcounter{hours}{\time}
\begin{document}

\bibliographystyle{amsplain}
\title[Boundary behavior results for harmonic mappings]{Koebe and Carathe\'odory type boundary behavior results for harmonic mappings}
\def\thefootnote{}
\footnotetext{ \texttt{\tiny File:~\jobname .tex,
          printed: \number\day-\number\month-\number\year,
          \thehours.\ifnum\theminutes<10{0}\fi\theminutes}
} \makeatletter\def\thefootnote{\@arabic\c@footnote}\makeatother

\author{Daoud Bshouty}
\address{Daoud Bshouty, Department of Mathematics, Technion -- Israel Institute of Technology, Haifa 32000, Israel}
\email{daoud@technion.ac.il}

\author{Jiaolong Chen}
\address{Jiaolong Chen, Key Laboratory of High Performance Computing and Stochastic Information Processing (HPCSIP) (Ministry of Education of China), College of Mathematics and Computer Science,
Hunan Normal University, Changsha, Hunan 410081, People's Republic of China}
\email{jiaolongchen@sina.com}

\author{Stavros Evdoridis}
\address{Stavros Evdoridis, Department of Mathematics and Systems Analysis, Aalto University, P. O. Box 11100, FI-00076 Aalto,
 Finland.} \email{stavros.evdoridis@aalto.fi}

\author{Antti Rasila$^\dagger$}
\address{Antti Rasila, Technion -- Israel Institute of Technology,  Guangdong Technion,
241 Daxue Road, Shantou, Guangdong 515063, People's Republic of China} \email{antti.rasila@iki.fi; antti.rasila@gtiit.edu.cn}

\thanks{The research was partly supported by NNSF of China (No. 11801166, No. 11971124), Academy of Finland, Natural Science Foundation of Hunan Province (No. 2018JJ3327)
and the construct program of the key discipline in Hunan Province.}

\subjclass[2000]{Primary: 30C55, 31A05; Secondary: 30C62.}
\keywords{Harmonic  mapping, Boundary behavior.\\
$^\dagger$ {\tt
 {\tt Corresponding author.}
} }


\begin{abstract}
We study the behavior of the boundary function of a harmonic mapping from global and local points of view. Results related to the Koebe lemma are proved, as well as a generalization of a boundary behavior theorem by Bshouty, Lyzzaik and Weitsman. We also discuss this result from a different point of view, from which a relation between the boundary behavior of the dilatation at a boundary point and the continuity of the boundary function of our mapping can be seen. 
\end{abstract}

\maketitle
\pagestyle{myheadings}
\markboth{D. Bshouty, J. Chen, S. Evdoridis, and A. Rasila}{Boundary behavior results for harmonic mappings}

\section{Introduction}
In this paper we consider certain questions related to the boundary behavior of harmonic mappings. More precisely, we first give  results  analogous to classical Koebe's lemma. 
In the last part, we present a new approach and a generalized version of \cite[Theorem 6]{BLW}.

{In 1915, Koebe}
\cite{Ko} proved that if a bounded analytic function tends to zero on a sequence of arcs in the unit disk $\mathbb{D}$, which approach a subarc of the unit circle $\partial \mathbb{D}$, then it must be identically zero.
In \cite{Run}, {Rung}
studied the behavior of the analytic functions when the arcs are of positive hyperbolic diameter. Results related to the Koebe lemma for quasiconformal mappings can also be found in \cite{Ra}. Our first aim is to obtain a similar result for harmonic mappings.

Another related result is the following classical theorem of Lindel\"of {(\cite[page 259]{Ru}): }

\begin{Thm}
If $\gamma$ is a parametric curve in $[0,1]$, such that $|\gamma (t)|<1$ and $\gamma (1)=1$, then for every analytic function $f$ which is bounded in $\mathbb{D}$, with
$$
\lim_{t\to 1}f(\gamma(t))
{=\alpha,}
$$
its angular limit at $1$ exists and is equal to $\alpha$.
\end{Thm}
{For generalized versions of this result for different classes of mappings, see e.g. \cite{Ge} and \cite{Ra}.}
Clearly, the harmonic function $u(z)=\arg z$ shows that the result as such does not hold for harmonic functions. However, in \cite{PR}, Ponnusamy and Rasila showed a connection between the multiplicity of the zeros of a harmonic mapping, as they tend to a boundary point along a line, and the existence of the angular limit at this point. These results show that under certain additional assumptions, the boundary behavior can be controlled, which serves as a motivation for this investigation.

Secondly, we discuss the Carath\'eodory-Osgood-Taylor theorem (also known as Carath\'eodory's extension theorem for conformal mappings), which was proved {by Carath\'eodory and independently by Osgood and Taylor in 1913.}
It allows the extension of a conformal mapping, obtained for example from the Riemann Mapping Theorem, to the boundary of the domain.

\begin{Thm}\label{Thm-A}
Let $\Omega$ be a Jordan domain and $f$ be a conformal mapping from $\mathbb{D}$ onto $\Omega$. Then $f$ can be extended to a homeomorphism of $\overline{\mathbb{D}}$ onto $\overline{\Omega}$ (the closure of $\Omega$).
\end{Thm}

An analogue of the above theorem also holds for the quasiconformal mappings on the plane
{(see \cite[pages 42-44]{LV}).}
This shows that conformal and quasiconformal mappings
{of $\mathbb{D}$ onto
Jordan domains behave similarly at $\partial\mathbb{D}$.}

However, the harmonic case is more complicated, because such a result does not hold. We have the following well-known counterexample.  Recall that for a function $f\colon \mathbb{D} \to  \mathbb{C}$ the {\it cluster set} of $f$ at the point $e^{it}$, is the quantity
$$C(f,e^{it})=\bigcap_{U} \overline{f(U\cap \mathbb{D})},$$
where the intersection is taken over all neighborhoods $U$ of $e^{it}$. 

\begin{example}
Let $f$ be the function given by the Poisson formula
$$f(z)=\frac{1}{2\pi}\int_0^{2\pi}\frac{1-|z|^2}{|e^{it}-z|^2}e^{i\theta (t)}dt, \qquad z\in\mathbb{D},$$
where $$\theta (t)=
\left\{
  \begin{array}{ll}
     0, \hspace{9mm} 0\leq t \leq 2\pi/3, & \hbox{\;} \\
     2\pi/3,  \hspace{2mm} 2\pi/3\leq t\leq 4\pi/3, & \hbox{\;}\\
     4\pi/3,  \hspace{2mm} 4\pi/3\leq t <2\pi .& \hbox{\;}
  \end{array}
\right.
$$
Then $f$ is a univalent harmonic mapping from $\mathbb{D}$ onto the triangle with vertices at the points $1$, $e^{2\pi i/3}$, $e^{4\pi i/3}$. In addition, it can be shown that,
{as $z$ approaches $\partial\mathbb{D}$, }
the cluster sets of $f$ at the points $1$, $e^{2\pi i/3}$ and $e^{4\pi i/3}$ are the three sides of the triangle and the circular arcs (separated by those points) are mapped to the three vertices. Clearly, the boundary function of $f$ is not a homeomorphism.
\end{example}

\subsection*{Continuity on the boundary}
The above discussions lead to the question of continuity of the boundary function. {Recall the next result of Hengartner and Schober}   \cite{HS}  which is concerned with the behavior of the boundary function of a harmonic mapping in $\mathbb{D}$.

\begin{Thm}
Let $\Omega$ be a bounded, simply connected Jordan domain whose boundary is locally connected. Suppose that $a$ is analytic
{in $\mathbb{D}$}
 with $a(\mathbb{D})\subset \mathbb{D}$ and $w_0$ is a fixed point in $\Omega$. Then there exists a univalent function $f$ such that $\overline{f_{\overline{z}}}(z)=a(z)f_z(z)$, i.e. $a(z)=a_f(z)$, with the following properties\\
\begin{enumerate}
\item[(a)] $f(0)=w_0$, $f_z(0)>0$ and $f(\mathbb{D})\subset \Omega$.
\item[(b)] There is a countable set $E\subset \partial \mathbb{D}$ such that the unrestricted limits $f^*(e^{it})=\lim_{z\to e^{it}}f(z)$ exist on $\partial \mathbb{D}\setminus E$ and they are on $\partial \Omega$.
\item[(c)] The functions
$$f^*(e^{it-})=\operatorname{ess}\lim_{s\uparrow t}f^*(e^{is}) \text{  and  }  f^*(e^{it+})=\operatorname{ess}\lim_{s\downarrow t}f^*(e^{is})$$
exist on $\partial\mathbb{D}$ and are equal on $\partial\mathbb{D}\setminus E$.
\item[(d)] The cluster set of $f$ at $e^{it}\in E$ is the straight line segment joining $f^*(e^{it-})$ to $f^*(e^{it+})$.
\end{enumerate}
\end{Thm}
For $e^{it}\in E$, we say that $f^*$ has a $\textit{jump discontinuity}$ at $e^{it}$. Otherwise $f^*$ is said to be $\textit{continuous}$.
{When $\|a(z)\|_{\infty}=1$, then $E$ can be nonempty,
where $\|a(z)\|_{\infty}=\sup\{|a(z)|:z\in\mathbb{D}\}$.}
 If $|a(z)|<k<1$, then $f$ is quasiconformal in $\mathbb{D}$ and hence it can be extended homeomorphically from $\overline{\mathbb{D}}$ onto $\overline{\Omega}$.

In the last part of the paper, we present a generalized version of a theorem proved by Bshouty, Lyzzaik and Weitsman (see \cite[Theorem 6]{BLW}). There, we define Jordan curves in $\mathbb{D}$, which end at a point $\zeta \in \partial \mathbb{D}$. Then we show that a condition on the integrals of a quantity which contains the second complex dilatation of a harmonic mapping $f$, over these arcs, imply the continuity of the boundary function $f^*$ at $\zeta$. This observation gives an extra tool that can be used in order for us to examine the behavior of the boundary function of a harmonic mapping. Moreover, it can be considered as a Carath\'eodory type result for a class of non-quasiconformal harmonic mappings.

\section{Preliminaroes}\label{sec-1}

A continuous mapping $f=u+iv$ is
a {\it complex-valued harmonic} mapping in a domain $\Omega\subset \mathbb{C}$,
if both $u$ and $v$ are real harmonic functions in $\Omega$. Then we write $\Delta f=0$, where $\Delta$ is the Laplace operator
$$\Delta=\frac{\partial^2}{\partial
x^2}+\frac{\partial^2}{\partial y^2}=4\frac{\partial^{2}}{\partial
z\partial\bar{z}}.$$
In any simply connected domain $\Omega$, $f$ has the unique representation $f = h+ \overline{g}$
{with $g(0)=0$,}
where $h$ and $g$ are analytic (see \cite{cl, du}).
A necessary and sufficient condition for $f$ to be locally
univalent and sense-preserving in $\Omega$ is that $|h'(z)|>|g'(z)|$ for all $z\in \Omega$. Let $$D_{f}:=\frac{|f_{z}|+|f_{\overline{z}}|}{|f_{z}|-|f_{\overline{z}}|}=\frac{1+|a_{f}|}{1-|a_{f}|}, \quad
{a_f :=\overline{f_{\overline{z}}}/f_z.}
$$
The quantity $D_{f}:=D_{f}(z)$ is called the {\it dilatation} of $f$ at the point $z$. Clearly, $1\leq D_{f}(z)<\infty$.
If $D_{f}(z)$ is bounded throughout a given region $\Omega$ by a constant $K\in [1,\infty)$, then the sense-preserving diffeomorphism $f$ is said to be {\it $K$-quasiconformal} in $\Omega$.
{A mapping }
that is $K$-quasiconformal for some $K\geq 1$ is simply called {\it quasiconformal}. In addition, the ratio
$\nu_{f}=f_{\overline{z}}/f_{z}$
is called the {\it complex dilatation} of $f$ and $a_f=\overline{f_{\overline{z}}}/{f_z}$ is said to be the {\it second complex dilation} of $f$. Thus, $0\leq |a_{f}|=|\nu_f|<1$ if, and only if, $f$ is sense-preserving.

{Let $\mathbb{D}(z_{0}, r)=\{z\in\mathbb{C}:|z-z_{0}|<r\}$ and $\overline{\mathbb{D}}(z_{0}, r)=\{z\in\mathbb{C}:|z-z_{0}|\leq r\}$.
In particular, we write
$\mathbb{D}(r)=\mathbb{D}(0, r)$ and $\overline{\mathbb{D}}(r)=\overline{\mathbb{D}}(0, r)$.}
In this paper, we consider the harmonic mappings
{in $\mathbb{D}$}
 or in $\mathbb{H}=\{z\in\mathbb{C}:\text{Im}\{z\}>0\}$.
Let $A\subset \mathbb{C}$ be a nonempty set.
We denote the $\textit{diameter}$ of $A$ by $d(A)$,
where $d(A)=\sup\{|z-w|: z,w\in A\}$.
The distance between a point $z_0\in \mathbb{C} \setminus A$ and the set $A$ is given by $d(z_0,A)=\inf\{|z_0-w|:w\in A\}$.


\section{Conformal invariants}\label{sec-2}

{In this section, we recall some}
 useful definitions and results we need for the proofs.

\subsection{Modulus of a paths' family}
A path in $\mathbb{C}$ is a continuous mapping $\gamma:I\rightarrow \mathbb{C}$, where $I$ is a (possibly unbounded) interval in $\mathbb{R}$.

Let $\Gamma$ be a paths' family in $\mathbb{C}$ and $\mathcal{F}(\Gamma)$ be the set of all Borel functions $\rho:\mathbb{C}\rightarrow [0,\infty]$ such that
$$ \int_{\gamma} \rho (z)\, |dz|\geq 1,$$
for every locally rectifiable path $\gamma \in\Gamma$.
The functions in $\mathcal{F}(\Gamma)$ are called {\it admissible functions} for $\Gamma$.
We define the {\it conformal modulus} of $\Gamma$ to be
$$\mathcal{M}(\Gamma)=\inf_{\rho\in \mathcal{F}(\Gamma)} \iint_{\mathbb{C}} \rho ^2(z)\,dxdy,$$
where $z=x+iy$.

Let $\Gamma_{1}$ and $\Gamma_{2}$ be paths' families in $\mathbb{C}$.
We say that $\Gamma_{2}$ is {\it minorized}
by $\Gamma_{1}$ and write $\Gamma_{1}<\Gamma_{2}$ if every $\gamma \in \Gamma_{2}$ has a subpath in $\Gamma_{1}$.
If
$\Gamma_{1}<\Gamma_{2}$,
{then $\mathcal{F}(\Gamma_{1})\subset \mathcal{F}(\Gamma_{2})$,}
 and hence $\mathcal{M}(\Gamma_{1})\geq \mathcal{M}(\Gamma_{2})$. Suppose that $A$ and $B$ are contained in a domain $D$. We write $\Delta (A,B;D)$ to denote the family of curves, connecting $A$ and $B$ in $D$. When $D$ is the whole plane, we write $\Delta (A,B)$.
For more information about the modulus of a paths' family, see e.g. \cite{AVV, Vas, Vuo}.

A domain $D$ in $\overline{\mathbb{C}}=\mathbb{C}\cup \{\infty\}$ is called a ring, if $\overline{\mathbb{C}}\backslash D $ has exactly two components.
If the components are $E$ and $F$, we denote the ring by $R(E,F)$.

\begin{example}
We consider the annulus $D:=A(R,R')=\{z=r\eit : R<r<R', \, 0\leq \theta < 2\pi\}$. Then, $$\mathcal{M}\left(\Delta(R,R',D)\right) = \frac{2\pi}{\log{\frac{R'}{R}}}.$$
 \end{example}

By \cite[Theorem 34.3]{Vai}, we have the following result.

\begin{lem}\label{lem-A} A sense-preserving homeomorphism $f:D\rightarrow D'$ is $K$-quasiconformal if and only if
$$ \mathcal{M}(\Gamma)/K\leq \mathcal{M}(f(\Gamma))\leq K\mathcal{M}(\Gamma)$$
for every path family $\Gamma$ in $D$.
\end{lem}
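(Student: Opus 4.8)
The plan is to prove this classical equivalence between the analytic definition of $K$-quasiconformality used above and its geometric (modulus) form, along the lines of \cite{LV,Vai}. Since in the setting of this paper $f$ is a (real-analytic) diffeomorphism, I would work directly in the smooth category, where no $ACL$ or almost-everywhere differentiability considerations are needed and the change of variables formula is unrestricted; throughout I would use that the operator norm of $Df(z)$ is $|f_z(z)|+|f_{\bar z}(z)|$ and its Jacobian is $J_f(z)=|f_z(z)|^2-|f_{\bar z}(z)|^2$, so that $(|f_z|+|f_{\bar z}|)^2=D_f\,J_f$.

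For necessity, I would assume $D_f\le K$ on $D$, fix a path family $\Gamma$ in $D$ and an admissible $\rho'\in\mathcal F(f(\Gamma))$, and set $\rho(z)=\rho'(f(z))(|f_z(z)|+|f_{\bar z}(z)|)$ on $D$ (and $0$ elsewhere). The chain rule gives $|(f\circ\gamma)'|\le(|f_z|+|f_{\bar z}|)\circ\gamma\cdot|\gamma'|$ for locally rectifiable $\gamma\in\Gamma$, so $\int_{f\circ\gamma}\rho'\,|dw|\le\int_\gamma\rho\,|dz|$; since $f\circ\gamma\in f(\Gamma)$ this is $\ge1$, i.e.\ $\rho\in\mathcal F(\Gamma)$. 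Then, using $(|f_z|+|f_{\bar z}|)^2=D_f\,J_f\le K\,J_f$ and the change of variables $w=f(z)$,
$$\iint_D\rho^2\,dx\,dy\le K\iint_D\rho'(f(z))^2\,J_f(z)\,dx\,dy=K\iint_{D'}\rho'(w)^2\,du\,dv,$$
and taking the infimum over $\rho'$ yields $\mathcal M(\Gamma)\le K\,\mathcal M(f(\Gamma))$. Applying the same reasoning to $f^{-1}\colon D'\to D$ (again a sense-preserving diffeomorphism with $D_{f^{-1}}=D_f\circ f^{-1}\le K$) and to $f(\Gamma)$ gives $\mathcal M(f(\Gamma))\le K\,\mathcal M(\Gamma)$; together these are the claimed inequalities.

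For sufficiency, I would assume the two-sided modulus inequality for every path family and show $D_f\le K$ on $D$. Since $f$ is a diffeomorphism, $D_f$ is continuous, so it suffices to contradict $D_f(z_0)>K$ at one point $z_0$. I would set $L=Df(z_0)$, choose orthonormal $e_1,e_2$ that $L$ maps to orthogonal vectors of lengths $\lambda_1\ge\lambda_2>0$ (the singular values, with $\lambda_1/\lambda_2=D_f(z_0)$), and for small $s>0$ take the square $Q_s$ of side $2s$ centred at $z_0$ with sides parallel to $e_1,e_2$ and the family $\Gamma_s$ of segments in $Q_s$ parallel to $e_2$; then $\mathcal M(\Gamma_s)=1$. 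In the affine model $z\mapsto f(z_0)+L(z-z_0)$, the image $f(Q_s)$ is a rectangle with semiaxes $\lambda_1 s,\lambda_2 s$ and $f(\Gamma_s)$ is the family of segments joining its long sides, so $\mathcal M(f(\Gamma_s))=\lambda_1/\lambda_2=D_f(z_0)$. For the actual $f$, differentiability at $z_0$ makes the rescaled maps $z\mapsto s^{-1}(f(z_0+sz)-f(z_0))$ converge in $C^1$ to $L$ on the fixed square $Q_1$, so $f(Q_s)$ converges (after the similarity $w\mapsto s^{-1}(w-f(z_0))$) to $L(Q_1)$ and $\mathcal M(f(\Gamma_s))\to D_f(z_0)>K$. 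For small $s$ this gives $\mathcal M(f(\Gamma_s))>K=K\,\mathcal M(\Gamma_s)$, contradicting the upper bound, so $D_f\le K$. (Alternatively this can be phrased through the conformal modulus of a quadrilateral and the classical agreement of the metric and analytic definitions of quasiconformality.)

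The hard part is the limiting step in the sufficiency direction: I must make rigorous that $\mathcal M(f(\Gamma_s))\to D_f(z_0)$, i.e.\ that the conformal modulus of a quadrilateral is stable under a $C^1$-small perturbation of the defining map. I would do this by transporting admissible functions through the near-identity map $L^{-1}\circ f$ and bounding the distortion they pick up; this is where the estimate must be made quantitative. In the more general, merely homeomorphic, case one faces the heavier task of first extracting the $ACL$ property from the modulus condition (a length–area argument on segment families in thin rectangles) and then invoking the Gehring–Lehto theorem that an $ACL$ open mapping is differentiable almost everywhere, after which the above infinitesimal argument applies at almost every point.
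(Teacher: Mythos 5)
Your proof is essentially correct, but it takes a different route from the paper for the simple reason that the paper offers no proof at all: Lemma \ref{lem-A} is quoted verbatim from V\"ais\"al\"a \cite[Theorem 34.3]{Vai}, so the authors treat it as a known black box. What you supply is the standard textbook argument, specialized to the smooth category. The necessity direction (pulling back an admissible $\rho'$ via $\rho=\rho'(f)(|f_z|+|f_{\bar z}|)$, using $(|f_z|+|f_{\bar z}|)^2=D_fJ_f\le KJ_f$ and change of variables, then repeating for $f^{-1}$) is complete and is in fact the only direction the paper ever uses (in the proof of Theorem \ref{thm-1}, to pass from $K_j$-quasiconformality on $\mathbb{D}(r_j)$ to $\mathcal M(\Gamma_j)\le K_j\mathcal M(f(\Gamma_j))$). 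Your sufficiency direction via rescaled squares aligned with the singular directions of $Df(z_0)$ is also sound, and you correctly isolate the one step that needs quantitative care, namely the stability $\mathcal M(f(\Gamma_s))\to\lambda_1/\lambda_2$; conjugating the near-identity map $L^{-1}\circ f$ by $L$ and applying the already-proved necessity direction with constant $1+\epsilon$ closes that gap. Two caveats are worth recording. First, the lemma as stated speaks of a sense-preserving \emph{homeomorphism}, and in that generality the equivalence genuinely requires the $ACL$/a.e.-differentiability machinery (or the metric definition) that you only gesture at; your proof covers only diffeomorphisms. Since the paper's own definition of $K$-quasiconformality is phrased for sense-preserving diffeomorphisms, and the map to which the lemma is applied is a univalent harmonic (hence real-analytic) diffeomorphism, this restriction costs nothing here, but it does mean you have proved a slightly weaker statement than V\"ais\"al\"a's. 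Second, your approach buys self-containedness and elementarity at the price of generality, whereas the citation buys the full homeomorphism case for free.
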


The lemma given below is a useful tool for us in order to prove Theorem \ref{thm-1} (see \cite[Lemma 5.33]{Ra}).

\begin{lem}\label{lem-B}
Let $C\subset \mathbb{D}$ be a continuum with $0<d(C)\leq 1$. Then, if $d(0,C)>0$, we have that
$$\mathcal{M}\left( \Delta(\overline{\mathbb{D}}(1/2), C;\mathbb{D})\right) \geq \frac{1}{4}\tau_2\left(\frac{d(0,C)}{d(C)}\right) .$$
\end{lem}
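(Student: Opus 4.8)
The plan is to reduce the inequality, by conformal invariance of $\mathcal M$ and the monotonicity and minorization rules recalled in Section~\ref{sec-2}, to the known value of the modulus of the curve family of a Gr\"otzsch/Teichm\"uller ring, together with the extremality of such rings among continua with prescribed distance and diameter; the constant $1/4$ will appear as the cost of these geometric reductions. First, if $d(0,C)\le 1/2$ then some point of $C$ already lies in $\overline{\mathbb D}(1/2)$, so $\Delta(\overline{\mathbb D}(1/2),C;\mathbb D)$ contains a constant path, $\mathcal M=\infty$, and there is nothing to prove; hence I may assume $r:=d(0,C)>1/2$.

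Passing from $C$ to its closure changes neither $d(0,C)$, $d(C)$, nor the modulus (a connecting curve in $\mathbb D$ for one family is one for the other after a trivial extension of the terminal point), so I take $C$ to be a compact continuum in $\overline{\mathbb D}$ with $R:=\max\{|z|:z\in C\}\in(1/2,1]$. Being connected, meeting the circles $|z|=r$ and $|z|=R$, and lying in the closed annulus between them, $C$ joins these two circles there. When $R<1$, restricting the ambient to $\mathbb D(R')$ with $R<R'<1$ only decreases $\mathcal M$, so rescaling by $1/R$ reduces the task to bounding from below
$$\mathcal M\left(\Delta\left(\overline{\mathbb D}\left(\tfrac{1}{2R}\right),\,C/R\,;\,\mathbb D\right)\right),$$
where $C/R$ is a continuum joining $|z|=r/R$ to $\partial\mathbb D$ and $\tfrac{1}{2R}<r/R$ because $r>1/2$; the case $R=1$ is identical, with no rescaling needed. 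Finally $\rho:=d(C)$ satisfies $R-r\le\rho\le 2R$ and $\rho\le 1$, and these relations between $r/\rho$ and the radii $r,R$ are what produce the factor $1/4$.

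Now comes the extremality step: among all continua joining $|z|=r/R$ to $\partial\mathbb D$ inside $\overline{\mathbb D}$, a radial segment $[(r/R)e^{i\theta},e^{i\theta}]$ minimizes the modulus of the family of curves joining it to $\overline{\mathbb D}(1/(2R))$ in $\mathbb D$. One sees this by circularly symmetrizing the continuum about the ray $\arg z=\theta$ (the disk $\overline{\mathbb D}(1/(2R))$ and the exterior of $\mathbb D$ being rotation--invariant), which does not increase this modulus, and then noting that the symmetrized continuum contains such a segment. For the radial segment the configuration is an explicit doubly connected domain $\mathbb D\setminus(\overline{\mathbb D}(1/(2R))\cup[r/R,1])$ which, after a M\"obius change of variable, is a Teichm\"uller ring, so the modulus of its connecting family equals $\tau_2$ of an explicit argument controlled by $r/R$; combining this with the relations of the previous paragraph and with the fact that $\tau_2$ is decreasing yields $\mathcal M(\Delta(\overline{\mathbb D}(1/2),C;\mathbb D))\ge\tfrac14\,\tau_2(d(0,C)/d(C))$.

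I expect the main obstacle to be the bookkeeping of the universal constant through this chain, together with two structural subtleties: the diameter $d(C)$ behaves badly under symmetrization, so it must be traded for the radii $r$ and $R$ at the outset; and one must genuinely use that the curves stay in $\mathbb D$, since the whole--plane modulus $\mathcal M(\Delta(\overline{\mathbb D}(1/2),C;\mathbb C))$ only bounds ours from above and is therefore useless --- the lower bound of the correct, $\tau_2$--type order has to be extracted from the bounded ambient, which is exactly what the reduction to a ring that is itself a domain in $\mathbb D$ (once $C$ is taken spanning) accomplishes. Alternatively, and presumably as in \cite[Lemma~5.33]{Ra}, the statement follows at once by specializing to $t=1/2$ a standard Teichm\"uller-type lower bound for $\mathcal M(\Delta(\overline{\mathbb D}(t),C;\mathbb D))$ with $C$ a continuum, of the kind found in \cite{AVV,Vuo}.
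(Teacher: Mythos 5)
First, a remark on the comparison itself: the paper does not prove this lemma at all --- it is quoted verbatim from \cite[Lemma 5.33]{Ra} --- so your closing sentence (``alternatively, \dots as in [Ra, Lemma 5.33]'') is in fact the paper's entire treatment. Judged as a proof, your constructive argument has a fatal gap at the symmetrization step. Circular symmetrization of $C$ about a ray is only guaranteed to produce a set containing the radial segment from $|z|=r$ to $|z|=R$, so the comparison configuration you end up with records the radial extent $R-r$ of $C$ and not its diameter $d(C)$, which can be far larger: for $C$ an arc of the circle $|z|=r$ one has $R-r=0$ while $d(C)$ can equal $1$. In that case your final configuration is the family of curves joining $\overline{\mathbb{D}}(1/2)$ to a single point, whose modulus is $0$, so the chain $\mathcal{M}(\Delta(E,C;\mathbb{D}))\ge\mathcal{M}(\Delta(E,C^{*};\mathbb{D}))\ge\mathcal{M}(\Delta(E,[r,R]e^{i\theta};\mathbb{D}))$ yields only the trivial bound $\mathcal{M}\ge 0$. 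More generally, since $\tau_2$ is decreasing and $R-r\le d(C)$, any estimate of the form $\tfrac14\tau_2\bigl(r/(R-r)\bigr)$ is \emph{weaker} than the asserted $\tfrac14\tau_2\bigl(r/d(C)\bigr)$; so ``trading $d(C)$ for the radii at the outset,'' as you propose, discards exactly the information the lemma is about. A correct argument must retain two points of $C$ realizing (a definite fraction of) its diameter, which is what the M\"obius-invariant (cross-ratio) form of the Teichm\"uller extremal property does when applied to the continua $\overline{\mathbb{D}}(1/2)\ni 0$ and $C$; symmetrization onto a ray cannot do this.

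Two further inaccuracies. The ring $\mathbb{D}\setminus\bigl(\overline{\mathbb{D}}(1/(2R))\cup[r/R,1]\bigr)$ is not a Teichm\"uller ring after a M\"obius change of variable: one complementary component is a closed disk, and no M\"obius map takes a disk to a segment; the configuration is of Gr\"otzsch type and has to be handled through $\gamma_2(s)=\tfrac12\tau_2(s^{2}-1)$ and monotonicity. And your dismissal of the whole-plane modulus is backwards: while $\mathcal{M}(\Delta(E,C))\ge\mathcal{M}(\Delta(E,C;\mathbb{D}))$ indeed goes the wrong way, the reflection principle in $\partial\mathbb{D}$ gives $\mathcal{M}(\Delta(E,C;\mathbb{D}))\ge\tfrac12\mathcal{M}(\Delta(E,C))$ for $E,C\subset\overline{\mathbb{D}}$ (see e.g.\ \cite{Vuo}), and this is precisely the standard way the bounded ambient is handled; it supplies one of the two factors of $\tfrac12$ in the constant $\tfrac14$. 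As written, neither that constant nor the argument $d(0,C)/d(C)$ of $\tau_2$ is ever actually derived; the trivial case $d(0,C)\le 1/2$ and the general reduction-to-canonical-rings outline are fine, but the quantitative heart of the lemma is missing.
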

\subsection{Canonical ring domains} \label{Crd}
The complementary components of the {\it Gr\"{o}tzsch ring} $R_G(s)$ in $\mathbb{C}$ are $\overline{\mathbb{D}}$ and $[s,\infty]$, $s>1$, and those of the {\it Teichm\"{u}ller ring} $R_T(s)$ are $[-1,0]$ and $[s,\infty]$, $s>0$.
We define two special functions $\gamma_2(s)$, $s>1$ and $\tau_2(s)$, $s>0$ by
$$
\left\{
  \begin{array}{ll}
     \gamma_2(s)=\mathcal{M}\big(\Delta(\overline{\mathbb{D}},[s,\infty])   \big), & \hbox{\;} \\
     \tau_2(s)=\mathcal{M}\big(\Delta([-1,0],[s,\infty])   \big), & \hbox{\;}
  \end{array}
\right.
$$
respectively. We shall refer to those functions as the {\it Gr\"{o}tzsch capacity function}
and the {\it Teichm\"{u}ller capacity function}. It is well-known \cite[Lemma 5.53]{Vuo} that for all
{$s>1$,}
$$ \gamma_2(s)=\frac{1}{2}\tau_2(s^{2}-1),$$
and that
$\tau_2:(0,\infty)\rightarrow (0,\infty)$ is a decreasing homeomorphism.
{Our notation is based on \cite{AVV}.}

\subsection{Hyperbolic distance}
{Let $\Omega$ be a simply connected domain in $\mathbb{C}$.
If $a$, $b\in\Omega$, then the {\it hyperbolic distance} between $a$ and $b$ in $\Omega$ is denoted by $\rho_{\Omega}(a,b)$ (cf. \cite[page 19]{Vuo}).
For $a\in\Omega$ and $M>0$, the hyperbolicball $\{x\in\Omega:\rho_{\Omega}(a,x)<M\}$ is denoted by $D_{\Omega}(a,M)$.
If $z_1, z_2 \in \mathbb{H}$, then (cf. \cite[Equality (2.8)]{Vuo})}
$$\cosh \left( \rho _{\mathbb{H}}(z_1,z_2) \right)=1+\frac{|z_1-z_2|^2}{2\text{Im} \{z_1\}\text{Im} \{z_2\}}.$$

An overview on hyperbolic geometry can be found in \cite{BM}.

\section{Koebe type results}

In this section, we shall show give two Koebe type results for sense-preserving harmonic mappings with different conditions.


The following result is an analogue of Koebe's lemma for univalent harmonic mappings in $\mathbb{D}$.
\begin{thm}\label{thm-1}
Suppose $f:\mathbb{D}\rightarrow \mathbb{C}$ is either a univalent sense-preserving harmonic mapping with $f_{\overline{z}}(0)=0$ or constant.
 Let $\alpha \in\mathbb{C} $ and $\{C_{j}\}$ be a sequence of nondegenerate continua such that
 $C_{j}\subseteq \mathbb{D}(r_{j})$,
{$C_{j}\rightarrow \partial \mathbb{D}$ as $j$ goes to infinity,
and $|f(z)-\alpha|<M_{j}$ for $z\in C_{j}$,
where $\lim_{j\rightarrow\infty}r_j=1$ and $\lim_{j\rightarrow\infty}M_j=0$.}
If
$$ \limsup_{j\rightarrow \infty} \tau_2\left( \frac{1}{d(C_{j})}\right)\left(\log \frac{1}{M_{j}}\right)\frac{1-r_{j}}{1+r_{j}}=\infty,$$
where $\tau_2$ is the Teichm\"uller capacity function, defined in Section \ref{Crd}, then $f \equiv \alpha$ in $\mathbb{D}$. \\

In particular, if
$$ \limsup_{j\rightarrow \infty}  \left(\log  \frac{1}{d(C_{j})}\right)^{-1}\left(\log \frac{1}{M_{j}}\right)\frac{1-r_{j}}{1+r_{j}}=\infty,$$
then $f\equiv \alpha$.
\end{thm}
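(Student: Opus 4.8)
The plan is to exploit the univalence and the normalization $f_{\overline z}(0)=0$ to compare $f$ with a suitable quasiconformal model, then to push the continuum $C_j$ forward under $f$ and estimate moduli of path families on both sides. Concretely, since $f$ is univalent, sense-preserving and harmonic on $\mathbb{D}$ with $f_{\overline z}(0)=0$, a Schwarz-type argument (applied to the analytic function $a_f$, or directly to $g'/h'$) gives a bound of the form $|a_f(z)| \le |z|$ on $\mathbb D$; hence on the disk $\mathbb D(r)$ the dilatation satisfies $D_f(z) \le (1+r)/(1-r)$, so $f$ restricted to $\mathbb D(r)$ is $K$-quasiconformal with $K = K(r) = (1+r)/(1-r)$. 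This is the structural input that lets us invoke Lemma \ref{lem-A}.

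Next I would set up the modulus comparison. Fix $j$ large and work inside $\mathbb D(r_j')$ for some $r_j' $ with $r_j < r_j' < 1$ and $r_j'\to 1$; on this disk $f$ is $K(r_j')$-quasiconformal. Consider the path family $\Gamma_j = \Delta(\overline{\mathbb D}(1/2), C_j; \mathbb D)$ (or its restriction to $\mathbb D(r_j')$). Since $C_j \to \partial\mathbb D$ we have $d(0,C_j)\to 1$, so $d(0,C_j)/d(C_j)$ is bounded below, and Lemma \ref{lem-B} gives a lower bound $\mathcal M(\Gamma_j) \ge \tfrac14 \tau_2\big(d(0,C_j)/d(C_j)\big)$; in the relevant regime $d(0,C_j)/d(C_j) \ge c/d(C_j)$ for a constant $c$, and since $\tau_2$ is decreasing this is $\gtrsim \tau_2(1/d(C_j))$ up to adjusting constants. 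On the image side, $f(\overline{\mathbb D}(1/2))$ is a fixed compact subset of $\Omega = f(\mathbb D)$ at positive distance $\delta_0$ from $f(0)$, while $f(C_j)$ lies in the ball $\mathbb D(\alpha, M_j)$; the family $f(\Gamma_j)$ therefore joins two sets at distance roughly $|f(0)-\alpha| - M_j$ separated by an annular region of large modulus, and a standard Teichmüller/Grötzsch estimate bounds $\mathcal M(f(\Gamma_j)) \lesssim 1/\log(1/M_j)$ (this uses that $f(0)$ is a fixed interior point, so $|f(0)-\alpha|$ is bounded below once we know $f\not\equiv\alpha$ — if $\alpha$ is not in the closure of $f(\mathbb D)$ this is immediate, and if it is a boundary value one argues that $f(C_j)$ is eventually far from $f(\overline{\mathbb D}(1/2))$). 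Combining via Lemma \ref{lem-A}, $\mathcal M(\Gamma_j) \le K(r_j')\,\mathcal M(f(\Gamma_j))$, yields
$$\tfrac14\,\tau_2\!\left(\tfrac{1}{d(C_j)}\right) \le C\,\frac{1+r_j'}{1-r_j'}\cdot\frac{1}{\log(1/M_j)},$$
i.e. $\tau_2(1/d(C_j))\,\log(1/M_j)\,\dfrac{1-r_j'}{1+r_j'} \le C'$; letting $r_j'\downarrow r_j$ (or absorbing the change into constants) contradicts the hypothesis that the $\limsup$ is $+\infty$, unless $f\equiv\alpha$.

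For the "in particular" statement I would use the known asymptotic behavior of the Teichmüller capacity function near $0$, namely $\tau_2(t)\sim 2\pi/\log(1/t)$ (equivalently $\tau_2(t) \asymp 1/\log(1/t)$) as $t\to 0^+$, together with $d(C_j)\le d(\mathbb D(r_j))\to$ small so $1/d(C_j)$ is large — wait, one must be careful: $d(C_j)$ need not tend to $0$. Here the cleaner route is that in all cases $\tau_2(1/d(C_j)) \ge \tau_2(1) > 0$ is not enough; instead observe that $C_j\subseteq \mathbb D(r_j)$ forces $d(C_j)\le 2$, and when $d(C_j)$ stays bounded away from $0$ the factor $\tau_2(1/d(C_j))$ is bounded below by a positive constant, so the first hypothesis already follows from $\log(1/M_j)\cdot(1-r_j)/(1+r_j)\to\infty$ along a subsequence, which is implied by the second hypothesis via $(\log(1/d(C_j)))^{-1}$ being bounded; and when $d(C_j)\to 0$ we use $\tau_2(1/d(C_j)) \asymp (\log(1/d(C_j)))^{-1}$ to see the second hypothesis is, up to a constant, the first. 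Either way the second displayed condition implies the first, so the theorem applies and $f\equiv\alpha$.

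The main obstacle I anticipate is the image-side modulus estimate: one needs to ensure that $f(C_j)$ is genuinely "small and far" so that $\mathcal M(f(\Gamma_j))$ really is of order $1/\log(1/M_j)$. This requires knowing $|f(0)-\alpha|$ (or more precisely $\operatorname{dist}(f(\overline{\mathbb D}(1/2)), \alpha)$) is bounded below by a positive constant independent of $j$; if $f\not\equiv\alpha$ and $\alpha\notin f(\overline{\mathbb D}(1/2))$ this is clear, and the degenerate possibility $\alpha\in f(\overline{\mathbb D}(1/2))$ can be ruled out because $f(C_j)\subset\mathbb D(\alpha,M_j)$ with $M_j\to0$ while $C_j\to\partial\mathbb D$, so by univalence $f(C_j)$ cannot cluster at an interior point that is also a value on the fixed compact $\overline{\mathbb D}(1/2)$ — this is where univalence is essential and must be invoked carefully. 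Handling the replacement of $\mathbb D$ by $\mathbb D(r_j')$ so that the quasiconformality constant is exactly $(1+r_j)/(1+r_j)$-type and passing to the limit in $r_j'$ is routine bookkeeping by comparison of the relevant path families.
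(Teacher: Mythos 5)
Your proposal is correct and follows essentially the same route as the paper: Schwarz's lemma applied to $a_f$ gives $K_j=(1+r_j)/(1-r_j)$-quasiconformality of $f$ on $\mathbb{D}(r_j)$, Lemma \ref{lem-B} bounds $\mathcal{M}(\Gamma_j)$ from below by $\tfrac14\tau_2(1/d(C_j))$, the annulus $\mathbb{D}(\alpha,w)\setminus\mathbb{D}(\alpha,M_j)$ with $w=d\big(f(\overline{\mathbb{D}}(1/2)),\alpha\big)$ bounds $\mathcal{M}(f(\Gamma_j))$ from above by $O\big(1/\log(1/M_j)\big)$, Lemma \ref{lem-A} produces the contradiction, and the second part follows from $\tau_2(s)\gtrsim 1/\log s$ for large $s$ exactly as in the paper (which cites \cite[Theorem 5.34]{Ra}). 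The only slip is in the monotonicity step: since $\tau_2$ is decreasing you need the upper bound $d(0,C_j)/d(C_j)\le 1/d(C_j)$ (i.e.\ $d(0,C_j)\le 1$, which is automatic), not a lower bound on that ratio, to conclude $\tau_2\big(d(0,C_j)/d(C_j)\big)\ge\tau_2\big(1/d(C_j)\big)$ --- a trivially repaired point --- while your discussion of why $w>0$ is actually more careful than the paper's.
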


Let $f(z)$ be analytic at $z_0$ and suppose that $f(z_0)=0$, but $f(z)$ is not identically zero. Then $f(z)$ has
a zero of order or multiplicity $n$ at $z_0$ if
$$
f(z_0)=f'(z_0)=\cdots=f^{(n-1)}(z_0)=0,\text{ and } f^{(n)}(z_0)\neq 0.
$$
If $f(z)$ is analytic at $z_0$, and has zero of order $n$ at $z_0$, we write $\mu(z_0,f)=n$. For a sense-preserving harmonic
mapping $f$ in ${\mathbb D}$ the multiplicity can be obtained from the decomposition $f= h + \overline{g}$.
Suppose that $h$ and $g$ have respectively multiplicity $n$ and $m$ at $0$ with $n\leq m$. We say that $f$ has zero of order $n$ at $z_0$ and write $\mu(z_0,f)=n$.

Next, we state a result concerning the boundary behavior of a sense-preserving harmonic mapping, defined in the upper half-plane $\mathbb{H}$, which has a condition involving multiplicity of the zeroes.

\begin{thm}\label{thm-2}
Suppose that $f=h+\overline{g}$ is a sense-preserving harmonic mapping in {$\mathbb{H}$}
with $|f(z)|<1$. Let $\{b_k\}$ be a sequence of points in $\mathbb{H}$ such that $f(b_k)=0$ and
{$\lim_{k\to \infty }b_k=0$.}
If
$$
{\lim_{ k\to \infty}\left( \frac{10-\rm{Im} \{ b_k\} }{10+\rm{Im} \{ b_k\}} \right)^{\mu(b_k,f)} = 0,}
$$
where $\mu (b_k,f)$ denotes the multiplicity of the zero of the harmonic mapping at $b_k$, then $f\equiv 0$ in $\mathbb{H}$.
\end{thm}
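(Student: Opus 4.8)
The plan is to convert the assertion---which is phrased in terms of the harmonic multiplicity $\mu(b_k,f)$---into a Schwarz--type estimate for bounded analytic self-maps of the disk. Assume, for contradiction, that $f\not\equiv 0$. Since $f(b_1)=0$, $f$ is non-constant, and because $f$ is sense-preserving we have $|g'|\le|h'|$ on $\mathbb{H}$, so $h$ cannot be constant; hence for every $k$ the integer $\mu_k:=\mu(b_k,f)=\operatorname{ord}_{b_k}(h-h(b_k))$ is finite and positive, with $\mu_k\le\operatorname{ord}_{b_k}(g-g(b_k))$. The one real difficulty is that no single analytic combination of $h$ and $g$ need vanish to order exactly $\mu_k$ at every $b_k$, so I insert a rotation parameter. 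For $t\in\mathbb{R}$ put $\mathcal{H}_t=e^{-it}h+e^{it}g$, analytic on $\mathbb{H}$; then $\operatorname{Re}\mathcal{H}_t=\operatorname{Re}(e^{-it}f)$, so $|\operatorname{Re}\mathcal{H}_t|\le|f|<1$, i.e.\ $\mathcal{H}_t$ maps $\mathbb{H}$ into the strip $S=\{w:|\operatorname{Re}w|<1\}$. Since the real part of an analytic germ vanishing to order $p$ vanishes to order exactly $p$, and since $\mathcal{H}_t-\mathcal{H}_t(b_k)=e^{-it}(h-h(b_k))+e^{it}(g-g(b_k))$, inspection of the lowest-order coefficient shows that $\operatorname{ord}_{b_k}(\mathcal{H}_t-\mathcal{H}_t(b_k))\ge\mu_k$ for all $t$, with equality for every $t\in[0,\pi)$ save at most one. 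The union over $k$ of these exceptional values of $t$ is countable, so we may fix $t_\ast\in[0,\pi)$ with $\operatorname{ord}_{b_k}(\Phi-\Phi(b_k))=\mu_k$ for every $k$, where $\Phi:=\mathcal{H}_{t_\ast}$; in particular $\Phi$ is non-constant.

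Next I linearize. Choose a conformal map $\Psi$ of $S$ onto $\mathbb{D}$, and for each $k$ let $T_k\in\Aut(\mathbb{D})$ be the automorphism with $T_k(a_k)=0$, where $a_k:=\Psi(\Phi(b_k))\in\mathbb{D}$. Then $\psi_k:=T_k\circ\Psi\circ\Phi\colon\mathbb{H}\to\mathbb{D}$ is analytic, and since $T_k\circ\Psi$ is biholomorphic near $\Phi(b_k)$ and sends $\Phi(b_k)$ to $0$, the map $\psi_k$ has a zero of order $\mu_k$ at $b_k$. Transporting $\mathbb{H}$ onto $\mathbb{D}$ by the M\"obius transformation carrying $b_k$ to $0$ and applying the Schwarz lemma for a zero of order $\mu_k$ gives
$$|\psi_k(z)|\le\left|\frac{z-b_k}{z-\overline{b_k}}\right|^{\mu_k},\qquad z\in\mathbb{H}.$$
Fix an arbitrary $z_0\in\mathbb{H}$. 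The hypothesis is equivalent to $\mu_k\log\frac{10+\operatorname{Im}b_k}{10-\operatorname{Im}b_k}\to\infty$, and since $\log\frac{10+y}{10-y}\le\frac{2}{5}y$ for $0<y\le5$ this forces $\mu_k\operatorname{Im}b_k\to\infty$. Moreover, using $|z_0-\overline{b_k}|^2\to|z_0|^2$ one gets $1-\left|\frac{z_0-b_k}{z_0-\overline{b_k}}\right|\ge c(z_0)\operatorname{Im}b_k$ for all large $k$, with $c(z_0)=\operatorname{Im}(z_0)/|z_0|^2>0$. Combined with the elementary $t^{\mu_k}\le e^{-\mu_k(1-t)}$, the displayed bound at $z=z_0$ tends to $0$, so $\psi_k(z_0)\to 0$.

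It remains to unwind the conformal maps. Because $\psi_k(z_0)=\frac{\Psi(\Phi(z_0))-a_k}{1-\overline{a_k}\,\Psi(\Phi(z_0))}$ and the denominator is bounded below by $1-|\Psi(\Phi(z_0))|>0$ uniformly in $k$, the relation $\psi_k(z_0)\to0$ forces $a_k\to\Psi(\Phi(z_0))$, hence $\Phi(b_k)=\Psi^{-1}(a_k)\to\Phi(z_0)$. Since the left-hand side of this limit does not involve $z_0$, the value $\Phi(z_0)$ is the same for every $z_0\in\mathbb{H}$; that is, $\Phi$ is constant on $\mathbb{H}$. This contradicts $\operatorname{ord}_{b_1}(\Phi-\Phi(b_1))=\mu_1<\infty$, which we arranged by the choice of $t_\ast$. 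Therefore $f\equiv0$.

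The heart of the argument is the first paragraph: transferring $\mu(b_k,f)$ onto a genuinely holomorphic function, where the rotation parameter $t$ does the work; the remaining ingredients---the Schwarz lemma, the elementary estimates extracting $\mu_k\operatorname{Im}b_k\to\infty$ from the hypothesis, and the observation that $f$ need not be assumed continuous at $0\in\partial\mathbb{H}$ because the crucial relation $\Phi(b_k)\to\Phi(z_0)$ is proved separately for each interior $z_0$---are routine. (The rotation trick can be avoided by running the argument twice, once for $\operatorname{Re}f$ and once for $\operatorname{Im}f$, using that $\min\{\operatorname{ord}_{b_k}(H-H(b_k)),\operatorname{ord}_{b_k}(G-G(b_k))\}=\mu_k$ for $H=h+g$ and $G=h-g$.)
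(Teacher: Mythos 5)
Your proof is correct, and it takes a genuinely different route from the paper's. The paper applies the harmonic Schwarz lemma with multiplicity (Lemma \ref{hschwarz}, quoted from Ponnusamy--Rasila) to $f\circ\phi_k$, where $\phi_k:\mathbb{D}\to\mathbb{H}$ is a M\"obius map with $\phi_k(0)=b_k$; this gives $|f(z)|\le\frac{4}{\pi}\bigl(\tanh\frac{R_k}{2}\bigr)^{\mu(b_k,f)}$ on the hyperbolic disk $D_{\mathbb{H}}(b_k,R_k)$, and the proof is completed by checking that $i\in D_{\mathbb{H}}(b_k,R_k)$ for $R_k=\log\frac{10}{\operatorname{Im}\{b_k\}}$, using the identity $\tanh\frac{R_k}{2}=\frac{10-\operatorname{Im}\{b_k\}}{10+\operatorname{Im}\{b_k\}}$ to match the hypothesis, concluding that $f$ vanishes on a fixed segment, and invoking the identity theorem for $h$ and $g$. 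You instead reduce to the classical analytic Schwarz lemma: the rotation trick $\mathcal{H}_t=e^{-it}h+e^{it}g$, whose real part is $\operatorname{Re}(e^{-it}f)$ and hence bounded, maps $\mathbb{H}$ into a strip, and the countable-exceptional-set argument lets you fix $t_\ast$ with $\operatorname{ord}_{b_k}(\Phi-\Phi(b_k))=\mu_k$ for every $k$; the elementary estimates $\mu_k\operatorname{Im}\{b_k\}\to\infty$ and $1-\bigl|\frac{z_0-b_k}{z_0-\overline{b_k}}\bigr|\ge c(z_0)\operatorname{Im}\{b_k\}$ then replace the paper's explicit hyperbolic-disk computation. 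Your version buys self-containedness (Lemma \ref{hschwarz} is never needed; your first paragraph in effect re-proves the part of it that is used here) and a clean endgame, since ``$\Phi$ constant'' directly contradicts the finite vanishing order you arranged. The paper's route buys brevity and keeps the hypothesis in its original form, the constant $10$ arising naturally from the containment $b_k\in\mathbb{D}(i,2)$. Two cosmetic points: the remark about the real part of a germ vanishing to order $p$ is not actually used, and since $|a_f(b_k)|<1$ forces $|b|<|a|$ for the leading coefficients, there are in fact no exceptional values of $t$ at all; neither affects correctness.
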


\subsection{Proof of Theorem \ref{thm-1}}
Assume that $f$ is univalent in $\mathbb{D}$ and  $0<d(C_{j})\leq \frac{1}{4}$.
Let $\Gamma_{j}=\Delta \big(\overline{\mathbb{D}}(\frac{1}{2}),C_{j};\mathbb{D}(r_{j}) \big)$. Then by Lemma \ref{lem-B}, we have
\be\label{eq-3.1}
\mathcal{M}(\Gamma_{j})\geq \frac{1}{4}\tau_2\left( \frac{d(0,C_{j})}{d(C_{j})} \right)\geq \frac{1}{4}\tau_2\left( \frac{1}{d(C_{j})} \right).
\ee
Now, let $w=d\left(f\left(\overline{\mathbb{D}}(\frac{1}{2})\right),\alpha \right)>0$.
{Because $\lim_{j\rightarrow\infty}M_j=0$,}
without loss of generality, we may assume that $M_{j}<\min\{w^{2},1/w\}$.
Then,
$$
f(\Gamma_{j} )>
{\Delta\big(\partial\mathbb{D}(\alpha, M_{j}),\partial\mathbb{D}(\alpha, w);}
\mathbb{D}(\alpha, w)
\backslash \mathbb{D}(\alpha, M_{j})\big),
$$
and hence,
\be\label{eq-3.2}\mathcal{M}(f(\Gamma_{j}))\leq 2\pi\left(\log \frac{w}{M_{j}} \right)^{-1}\leq 2\pi\left(\frac{1}{2}\log \frac{1}{M_{j}} \right)^{-1},\ee
where the last inequality is obtained in \cite[Lemma 5.31]{Ra}.

Because $f$ is a univalent sense-preserving harmonic mapping with $f_{\overline{z}}(0)=0$,
we have that $J_f(z)\:=|f_z(z)|^2-|f_{\overline{z}}(z)|^2>0$ and thus, $a _f$ is analytic in $\mathbb{D}$, with $|a_f|=|\nu _f|<1$ and $a _f(0)=0$. By the classical Schwarz lemma it follows that  $|a_{f}(z)|\leq |z|$ in $\mathbb{D}$ and $$D_{f}=\frac{|f_{z}|+|f_{\overline{z}}|}{|f_{z}|-|f_{\overline{z}}|}=\frac{1+|a_{f}|}{1-|a_{f}|}\leq \frac{1+|z|}{1-|z|}.$$
Therefore, for any $z\in\mathbb{D}(r_{j})$,  $$D_{f}\leq \frac{1+|z|}{1-|z|}\leq \frac{1+r_{j}}{1-r_{j}}=K_{j},$$
which implies that $f$ is $K_{j}$-quasiconformal in the disk $\mathbb{D}(r_{j})$. Then Lemma \ref{lem-A} leads to
\be\label{eq-3.3} \mathcal{M}(\Gamma_{j} )\leq  K_{j} \mathcal{M}(f(\Gamma_{j})).\ee
It follows from \eqref{eq-3.1}$\sim$\eqref{eq-3.3}, we have
$$\tau_2\left( \frac{1}{d(C_{j})} \right)\leq 16\pi\frac{1+r_{j}}{1-r_{j}}\left(\log \frac{1}{M_{j}} \right)^{-1}. $$
That is
$$\tau_2\left( \frac{1}{d(C_{j})} \right)\left(\log \frac{1}{M_{j}} \right)\frac{1-r_{j}}{1+r_{j}}\leq 16\pi,$$
which is a contradiction with the assumption. The proof of the first part is finished.

By the same argument as in the second part of \cite[Theorem 5.34]{Ra}, we have that
$$ \tau_2\left( \frac{1}{d(C_{j})} \right)\left(\log \frac{1}{M_{j}} \right)\geq \frac{\pi}{2} \left(\log \frac{1}{d(C_{j})} \right)^{-1}\left(\log \frac{1}{M_{j}} \right),  $$
which gives the second part of the theorem.  \qed

Recall the following, which is originally from an unpublished manuscript of  Mateljevi\'c and Vuorinen (see \cite[Lemma 4.3]{PR}):

\begin{lem}\label{hschwarz}
Let $f$ be a sense-preserving harmonic mapping of $ \mathbb{D}$ such that $f(0)=0$ and $f(\mathbb{D})\subset  \mathbb{D}$. Then
\begin{equation*}
|f(z)|\le \frac{4}{\pi}\arctan |z|^{\mu(0,f)}\le \frac{4}{\pi}|z|^{\mu(0,f)}
\quad\mbox{ for $z\in \mathbb{D}$.}
\end{equation*}
\end{lem}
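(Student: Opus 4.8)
The plan is to reduce the complex harmonic inequality to a real one, then convert the real harmonic function into an analytic self-map of $\mathbb{D}$ by straightening a strip, so that the classical Schwarz lemma with multiplicity applies; the only genuinely analytic input will be one scalar inequality about $\arctan$. This route avoids any appeal to boundary values or Poisson representations.

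First I would fix $z_0\in\mathbb{D}$ and reduce to a real harmonic function. If $f(z_0)=0$ there is nothing to prove, so assume $f(z_0)\neq0$ and set $\alpha=\arg f(z_0)$. Replacing $f$ by $e^{-i\alpha}f$ leaves a sense-preserving harmonic self-map of $\mathbb{D}$ fixing $0$, and since multiplying by a unimodular constant changes neither the orders of $h$ nor of $g$, it preserves $\mu(0,f)=n$; thus we may assume $f(z_0)=|f(z_0)|>0$. Keeping the notation $f=h+\overline{g}$ and putting $u=\operatorname{Re}f$, the function $u=\operatorname{Re}(h+g)=\operatorname{Re}H$ for the analytic $H=h+g$ is real harmonic, satisfies $|u|\le|f|<1$ and $u(z_0)=|f(z_0)|$, and $H$ has a zero of order at least $n$ at $0$. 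It therefore suffices to show $u(z_0)\le\frac{4}{\pi}\arctan|z_0|^{n}$.

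Next I would straighten the strip. Because $|\operatorname{Re}H|<1$, the map $H$ sends $\mathbb{D}$ into $S=\{w:|\operatorname{Re}w|<1\}$, and $\phi(w)=\tan(\pi w/4)$ is a conformal map of $S$ onto $\mathbb{D}$ with $\phi(0)=0$; indeed the tangent addition formula shows that $\operatorname{Re}w=\pm1$ is carried to the unit circle. Hence $G=\phi\circ H=\tan(\frac{\pi}{4}H)$ is an analytic self-map of $\mathbb{D}$, and since $\tan\zeta=\zeta+O(\zeta^{3})$ while $H$ vanishes to order $\ge n$, so does $G$. Applying the maximum principle to $G(z)/z^{n}$ (the Schwarz lemma with multiplicity) yields $|G(z)|\le|z|^{n}$ for all $z\in\mathbb{D}$. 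Inverting, $H=\frac{4}{\pi}\arctan G$, so that $u=\frac{4}{\pi}\operatorname{Re}\arctan G$, and what remains is to bound $\operatorname{Re}\arctan$.

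The key inequality, and the step I expect to be the main obstacle, is
$$|\operatorname{Re}\arctan\zeta|\le\arctan|\zeta|\qquad(|\zeta|<1),$$
i.e.\ that among all $\zeta$ of fixed modulus the quantity $\operatorname{Re}\arctan\zeta$ is extremal on the real axis. I would prove it from the identity $\operatorname{Re}\arctan\zeta=\frac12\arg\frac{1+i\zeta}{1-i\zeta}$: writing $\eta=i\zeta$ and $s=|\zeta|$, the Möbius map $\eta\mapsto\frac{1+\eta}{1-\eta}$ carries the disk $|\eta|\le s$ onto a disk centered on the positive real axis with center $(1+s^{2})/(1-s^{2})$ and radius $2s/(1-s^{2})$, so its largest argument seen from the origin has sine $\frac{2s}{1+s^{2}}=\sin(2\arctan s)$; hence $|\arg\frac{1+i\zeta}{1-i\zeta}|\le2\arctan s$. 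Combining the three steps and using that $\arctan$ is increasing,
$$u(z_0)=\tfrac{4}{\pi}\operatorname{Re}\arctan G(z_0)\le\tfrac{4}{\pi}\arctan|G(z_0)|\le\tfrac{4}{\pi}\arctan|z_0|^{n},$$
which is the first asserted bound; the second, $\frac{4}{\pi}\arctan t\le\frac{4}{\pi}t$ for $t\ge0$, is immediate. Everything outside the boxed inequality is routine, so the write-up would concentrate the work there.
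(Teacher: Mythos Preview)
Your argument is correct. The paper itself does not prove this lemma; it merely quotes it as \cite[Lemma~4.3]{PR}, attributing the result to an unpublished manuscript of Mateljevi\'c and Vuorinen, and then uses it as a black box in the proof of Theorem~\ref{thm-2}. So there is no ``paper's own proof'' to compare against.

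Your route---rotating so that $f(z_0)>0$, passing to the analytic function $H=h+g$ whose real part dominates $|f(z_0)|$, conformally mapping the strip $\{|\operatorname{Re}w|<1\}$ onto $\mathbb{D}$ via $w\mapsto\tan(\pi w/4)$, applying the Schwarz lemma with multiplicity to $G=\tan(\tfrac{\pi}{4}H)$, and then invoking the sharp scalar inequality $|\operatorname{Re}\arctan\zeta|\le\arctan|\zeta|$---is a clean and self-contained proof. The two points worth a word of justification in a write-up are: (i) that $H=h+g$ indeed vanishes to order at least $\mu(0,f)$ (which holds because sense-preservation forces $\mathrm{ord}_0\,h\le\mathrm{ord}_0\,g$, so no cancellation can lower the order below $\mu(0,f)$), and (ii) that the rotation $f\mapsto e^{-i\alpha}f$ replaces $(h,g)$ by $(e^{-i\alpha}h,\,e^{i\alpha}g)$ and hence preserves both sense-preservation and the individual orders. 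You have both of these right. The M\"obius computation bounding $\big|\arg\tfrac{1+i\zeta}{1-i\zeta}\big|$ by $2\arctan|\zeta|$ is also correct and is the natural way to establish the ``boxed'' inequality.
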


\subsection{Proof of Theorem \ref{thm-2}}
Assume that $f:\mathbb{H}\to \mathbb{D}$ is a sense-preserving harmonic mapping with $f(b_k)=0$.
{Let $\phi_k : \mathbb{D}\to \mathbb{H}$ be the M\"obius transformation, }
with $\phi_k (0)=b_k$, $k=1,2,\ldots$.
Then, the function $\psi_k = f\circ \phi_k $ is a sense-preserving harmonic mapping
{from $\mathbb{D}$ into itself, }
with $\psi_k(0)=0$.
{Therefore, from Lemma \ref{hschwarz}, }
we have that
$$|\psi_k (w)|=|(f\circ \phi_k)(w)| \leq \frac{4}{\pi}\arctan|w|^{\mu(0,\psi_k)} \leq  \frac{4}{\pi}|w|^{\mu(b_k,f)}\, , \quad w\in \mathbb{D},$$
where $\mu(z_0,f)$ denotes the multiplicity of the zero of the harmonic mapping $f$ at the point $z_0$, as it is stated in \cite{PR}.

{For $w\in \mathbb{D}$, let $z=\phi_k(w)$.}
Because the M\"obius transformations from $\mathbb{D}$ onto $\mathbb{H}$ are isometries of the hyperbolic distance,
then $z\in D_{\mathbb{H}}(b_k,R_k)$ if and only if $w\in D_{\mathbb{D}}(0,R_k)$,
which is equivalent to the condition $|w|<\tanh\frac{R_k}{2}$, where $R_k>0$.
Thus, for all $z\in D_{\mathbb{H}}(b_k,R_k)$, we have
\begin{eqnarray}
\label{eq1}
|f(z)|=|\psi_k(w)|\leq \frac{4}{\pi}\left( \tanh\frac{R_k}{2}\right)^{\mu(b_k,f)}.
\end{eqnarray}
\bcl\label{claim-4.1} There exists an index $k_0$ such that for $k\ge k_0$, $i\in D_{\mathbb{H}}(b_k,R_k)$, where
{$R_k=\log{\frac{10}{\rm{Im} \{b_k\}}}$.}
\ecl

It follows from the assumption $\lim_{k\rightarrow\infty}b_k=0$
{in the theorem}
that there exists an index $k_0$, such that, for $k\ge k_0$ the sequence $\{b_k\}$ is contained {in $\mathbb{D}(i,2)$.}
Hence, for every $k\geq k_0$,
$$\cosh\big(\rho_{\mathbb{H}}(i,b_k)\big)=1+\frac{|i-b_k|^2}{2 \text{Im} \{b_k\}}\leq 1+\frac{2}{\text{Im} \{b_k\}}.$$
Because the number of the terms of $\{b_k\}$  which are outside of {$\mathbb{D}(i,2)$ is finite, }
we may assume that every term is contained in it.
Therefore,
$$\frac{e^{\rho_{\mathbb{H}}(i,b_k)}+e^{-\rho_{\mathbb{H}}(i,b_k)}}{2}=\cosh\left(\rho_{\mathbb{H}}(i,b_k)\right)\leq 1+\frac{2}{\text{Im} \{b_k\}}$$
or
\be\label{eq-4.5}\log\left(e^{2\rho_{\mathbb{H}}(i,b_k)}+1\right)\leq \log\left(e^{\rho_{\mathbb{H}}(i,b_k)}\left(2+\frac{4}{\text{Im} \{b_k\}}\right) \right).\ee
Because $2\rho_{\mathbb{H}}(i,b_k)<\log\left(e^{2\rho_{\mathbb{H}}(i,b_k)}+1\right)$
and
\begin{eqnarray*}
2+\frac{4}{\text{Im} \{b_k\}}<\frac{10}{\text{Im} \{b_k\}},
\quad \text{ for }\text{Im} \{b_k\}<3, \\
\end{eqnarray*}
\eqref{eq-4.5} implies that
$$\rho_{\mathbb{H}}(i,b_k)< \log{\frac{10}{\text{Im} \{b_k\}}}.$$
Hence, Claim \ref{claim-4.1} is true.

Now, it follows from \eqref{eq1} and Claim \ref{claim-4.1} that
$$|f(i)|\leq \frac{4}{\pi}\left( \tanh\frac{\log{\frac{10}{\text{Im} \{b_k\}}}}{2}\right)^{\mu(b_k,f)}.$$
Moreover, $$\tanh\left(\frac{\log{\frac{10}{\text{Im} \{b_k\}}}}{2}\right)=\frac{10-\text{Im} \{b_k\}}{10+\text{Im} \{b_k\}}$$
and thus, our initial assumption implies that
$$|f(i)|\leq \lim_{k\to \infty}\left( \frac{4}{\pi}\left( \tanh\frac{\log{\frac{10}{\text{Im} \{b_k\}}}}{2}\right)^{\mu(b_k,f)}\right) =0.$$
The same argument can be applied at any point in the disk $D_{\mathbb{H}}(b_k,R_k)$ and hence, $f(z)=0$ for all $z\in D_{\mathbb{H}}(b_k,R_k)$.

{Because the line segment $[i/2 ,i]$}
is contained in each disk $D_{\mathbb{H}}(b_k,R_k)$ for sufficiently large $k$, by applying the Uniqueness (or Identity) Theorem at both the analytic and the anti-analytic parts of $f$, we obtain that $f\equiv 0$ in $\mathbb{H}$.
 \qed

 \begin{rem}
Theorem \ref{thm-2} stays true even if we replace the constant $10$ by $4+\epsilon$, for any number $\epsilon>0$. As $\{b_k\}$ tends to zero, almost every term of the sequence is contained in the strip $\{z\in \mathbb{C}: 0<\text{Im} \{z\}<\epsilon/2\}$, for a fixed number $\epsilon>0$. Considering the terms $b_k$ of the disk $\mathbb{D}(i,2)$, with $\text{Im} \{b_k\}<\epsilon/2$, leads to the improved condition
$$\left( \frac{(4+\epsilon)-\text{Im} \{b_k\}}{(4+\epsilon)+\text{Im} \{b_k\}} \right)^{\mu(b_k,f)} \to 0\qquad \text{as} \qquad k\to \infty ,$$
 at the statement of the theorem.
 \end{rem}


\section{Dilatation and the boundary function} \label{dilat}
The next result is a generalization of a result by Bshouty, Lyzzaik and Weitsman (see \cite{BLW}), which removes the assumption that the dilatation has to be a Blaschke product. It connects the boundary behavior of the dilatation
$a_{f}$ of a univalent harmonic mapping $f$
with the continuity of its boundary function $f^*$ at a point of $\partial \mathbb{D}$. Here $f^*$ is a complex Lebesgue integrable function on $\partial  \mathbb{D}$, which has $f$ as its Poisson integral.

Let $f(z)=h(z)+\overline{g(z)}$ be a harmonic mapping, then its dilatation {is $a_{f}(z)=\frac{g'(z)}{h'(z)}.$}
One cannot expect that the dilatation itself can decide
{whether $f^*(\zeta)$ is continuous without involving both $h$ and $g$ themselves, where $\zeta\in \partial \mathbb D$ and $f^*$ is the boundary function of $f$.}
If we assume $f$ is sense preserving,
{then $|a_{f}(z)|<1$}
 which ensures that $|g'(z)|<|h'(z)|.$ This
may reduce the problem to $h(z).$ Indeed this is the case, Lemma 2 in [3]  states in particular:

If $f$ is a sense-preserving harmonic mapping and $f^*$ is of bounded variation then for $\zeta\in \partial \mathbb D,$
$f^*$ is continuous at $\zeta$ if, and only if,
$$\lim_{r\rightarrow1^-}(1-r)h'(r\zeta)=0.$$
In the next theorem, we allow ourselves to connect the continuity of $f^*$ with $a$ by tying them  to area of $f(\mathbb D).$

For $m>0$ and $\zeta =e^{i\theta _0} \in \partial \mathbb{D}$, we define the curve
$\Gamma_{\zeta , m}(\theta)=(1 -m|\theta -\theta _0|)e^{i\theta}$,
$0<|\theta - \theta _0| \leq \min\{\pi, 1/m\}$.
It is clear that $\Gamma_{\zeta , m}$ is completely included in $\mathbb{D}$,
tending to $\zeta $ as $\theta \to \theta_0$.

In the case $\zeta = 1$, we have the curve $\Gamma _m (\theta)= (1-m|\theta|)e^{i\theta}$,
which is symmetric with respect to the real axis, meeting it either at the point $m\pi -1$ (when $m<1/\pi$) or at 0 (when $m\geq 1/\pi$).
We denote by $R_m$ the bounded region which has the boundary $\Gamma_m$.

\begin{thm}
\label{theorem3}
Suppose that $\Omega$ is a bounded, convex and simply connected domain. Let $a_f$ be an analytic function
{from $\mathbb{D}$ into}
 itself and $f$ be the univalent solution of $\overline{f_{\overline{z}}}=a_f f_z$
 {from $\mathbb{D}$ onto $\Omega$.}
 If for each $0<m<1/\pi$
$$
{\mathcal L (m):=}
\int_{\Gamma_{\zeta,m}} \frac{1-|a_f(z)|^2}{1-|z|^2}|dz|=\infty, $$
then the boundary function $f^*$ is continuous at $\zeta \in \partial \mathbb{D}$.
\end{thm}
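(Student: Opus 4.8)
The plan is to argue by contradiction: I will assume $f^*$ is \emph{not} continuous at $\zeta$ and, using the hypothesis $\mathcal L(m)=\infty$ for every $m\in(0,1/\pi)$, deduce $\area(\Omega)=\infty$, which is absurd. Composing with a rotation, take $\zeta=1$. The first input is the boundary structure coming from convexity: since $\Omega$ is bounded and convex, the boundary theory of univalent sense\nobreakdash-preserving harmonic mappings onto convex domains (Hengartner--Schober; cf.\ \cite{HS,BLW}) shows that $f$ extends continuously to $\overline{\mathbb D}$ off a countable set $E\subset\partial\mathbb D$, that $f^*$ is of bounded variation on $\partial\mathbb D$, and that for a jump point $\zeta=1$ the cluster set $C(f,1)$ is the segment joining $A:=f^*(1-)$ to $B:=f^*(1+)$. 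The key structural fact I would isolate as a lemma is that at such a point the analytic part $h$ of $f=h+\overline g$ has the form
$$h(z)=\frac{A-B}{2\pi i}\log(1-z)+h_1(z),$$
with $h_1$ analytic in $\mathbb D$, $h_1^*$ of bounded variation near $1$ and continuous at $1$; equivalently $(1-r)h'(r)\to\frac{B-A}{2\pi i}\ne 0$, which is exactly the discontinuity criterion of \cite[Lemma 2]{BLW} quoted above. This is where convexity really does the work, and it is the part of the argument of \cite{BLW} that does not actually use that $a_f$ is a Blaschke product.

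Write $L:=|A-B|>0$ and $P(z):=1-|a_f(z)|^2$. From the lemma, $h'(z)=\frac{B-A}{2\pi i(1-z)}+h_1'(z)$, and a standard Cauchy--Stieltjes estimate for $h_1'$ — split the contribution of $dh_1^*$ into the parts near and away from $1$ and use $\int_{|\eta-1|<\delta}|dh_1^*|\to 0$ — gives $(1-z)h_1'(z)\to 0$ as $z\to1$ along any fixed curve $\Gamma_{1,m}$. Hence for each $m\in(0,1/\pi)$ there is $\delta_m>0$ with
$$|h'(z)|\ \ge\ \frac{L}{4\pi\,|1-z|}\qquad\text{for }z\in\Gamma_{1,m},\ |z-1|<\delta_m.$$
Since $f$ is univalent and sense-preserving onto $\Omega$, one has $J_f=|h'|^2P>0$ and
$$\area(\Omega)=\iint_{\mathbb D}J_f\,dx\,dy=\iint_{\mathbb D}|h'(z)|^2P(z)\,dx\,dy<\infty.$$

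Now fix $m_1<1/\pi$. The curves $\{\Gamma_{1,m}\}_{0<m<m_1}$ foliate a horn-shaped region $N\subset\mathbb D$ with $\overline N\cap\partial\mathbb D=\{1\}$: the map $(m,t)\mapsto z(m,t)=(1-m|t|)e^{it}$, $0<m<m_1$, $0<|t|<\pi$, is injective with Jacobian $|t|(1-m|t|)$, and on $\Gamma_{1,m}$ one has $1-|z|=m|t|$, $|z-1|\asymp|t|$, $|dz|\asymp dt$ with constants uniform on $N$. On $\Gamma_{1,m}\cap\{|z-1|\ge\delta_m\}$ the quantity $1-|z|$ is bounded below, so that part of $\mathcal L(m)$ is finite; hence $\mathcal L(m)=\infty$ forces $\int_{\Gamma_{1,m}\cap\{|z-1|<\delta_m\}}\frac{P}{1-|z|^2}|dz|=\infty$, and since $1-|z|^2\ge\frac{m}{2}|z-1|$ there, also $\int_{\Gamma_{1,m}\cap\{|z-1|<\delta_m\}}\frac{P}{|z-1|}|dz|=\infty$. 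Combining with the pointwise bound,
$$\int_{\Gamma_{1,m}}|h'|^2P\,|z-1|\,|dz|\ \ge\ \frac{L^2}{16\pi^2}\int_{\Gamma_{1,m}\cap\{|z-1|<\delta_m\}}\frac{P}{|z-1|}\,|dz|\ =\ \infty$$
for every $m\in(0,m_1)$. Passing to the $(m,t)$ variables and using Tonelli,
$$\area(\Omega)\ \ge\ \iint_{N}|h'|^2P\,dx\,dy\ \gtrsim\ \int_0^{m_1}\!\Big(\int_{\Gamma_{1,m}}|h'|^2P\,|z-1|\,|dz|\Big)\,dm\ =\ \int_0^{m_1}\infty\,dm\ =\ \infty,$$
contradicting the finiteness above. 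Therefore $f^*$ is continuous at $\zeta$.

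The main obstacle is the structural lemma in the first paragraph — extracting the exact logarithmic singularity of $h$ at a jump point of $f^*$ and proving that the remainder $h_1^*$ has bounded variation near that point; everything downstream (the decay $(1-z)h_1'(z)\to0$ along $\Gamma_{1,m}$, the lower bound on $|h'|$, the injectivity and Jacobian of the foliation, the Tonelli step) is routine once it is in hand, and I would spend the effort there, adapting the corresponding analysis of \cite{BLW}. One minor bookkeeping point: $\delta_m$ may depend on $m$, which is harmless because $\mathcal L(m)=\infty$ is assumed for \emph{every} $m\in(0,1/\pi)$, so only the part of $\Gamma_{1,m}$ inside $\{|z-1|<\delta_m\}$ is used; one also needs only the trivial observations that the far part of $\mathcal L(m)$ and the portion of the area integral outside $N$ are finite.
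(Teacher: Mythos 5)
Your proposal is correct and follows essentially the same route as the paper's proof: argue by contradiction, use the Bshouty--Lyzzaik--Weitsman jump criterion to get $(1-|z|)|h'(z)|\to c>0$ nontangentially, rewrite $\mathcal L(m)$ via $1-|a_f|^2=(|h'|^2-|g'|^2)/|h'|^2$ to bound the Jacobian integral along each $\Gamma_{1,m}$ from below, and integrate over the foliation by these curves (Jacobian $|\theta|$) to contradict the finiteness of the area of $\Omega$. Your extra detail on the logarithmic singularity of $h$ and the near/far splitting along $\Gamma_{1,m}$ is just a more careful bookkeeping of the same steps.
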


\begin{proof}
Without loss of generality, we can assume that $\zeta=1$.
Suppose that $\mathcal L (m)=\infty$,
 for every $m$, $0<m<1/\pi$,
and assume on the contrary that $f^*$ has a jump at $1$.
Then \cite[Theorem 3]{BLW} implies that the angular limit $\lim_{z\to 1}(1-|z|)|h'(z)|$ is equal to $c>0$.
Because $|a_f(z)|<1$ and $f$ is univalent in $\mathbb{D}$, we have that $|h'(z)|>0$ in $\mathbb{D}$ and the same is true for $(1-|z|)|h'(z)|$.
Indeed, $|h'(z)|>|g'(z)|$ means that both $|h'(z)|$ and $(1-|z|)|h'(z)|$ do not vanish in $\mathbb{D}$.
The fact that the angular limit $\lim_{z\to 1}(1-|z|)|h'(z)|$ is positive implies that $|h'(z)|$ is also positive when it tends to $1$ in any angle.

Fix $m=m_0<1/\pi$, \ $0<|\theta|\leq \pi$ and $z\in \Gamma_{m_{0}} (\theta)$, we have
$$\Big| \frac{dz}{d\theta}\Big| =\sqrt{m^{2}_{0}+(1-m_0|\theta|)^{2}}\leq m_{0}+1-m_0|\theta|<2.$$
Thus,
\begin{eqnarray}
\infty =
\mathcal L (m_0)
&=& \int_{\Gamma _{m_0}} \frac{(|h'(z)|^2-|g'(z)|^2)(1-|z|)}{|h'(z)|^2(1-|z|)^2(1+|z|)}|dz| \nonumber \\
&\leq &
\int_{\Gamma _{m_0}} c'  (|h'(z)|^2-|g'(z)|^2)m_{0}|\theta|\; |dz|
 \nonumber \\
&<&
c_{1}
 \int_{-\pi}^\pi (|h'(z)|^2-|g'(z)|^2) |\theta| d\theta \label{prop1} ,
\end{eqnarray}
where $c'$, $c_{1}$ are two positive constants and
{$z= (1-m_{0}|\theta|)e^{i\theta}$.
}

\begin{figure}[h]
\begin{center}
\includegraphics[width=7.5cm]{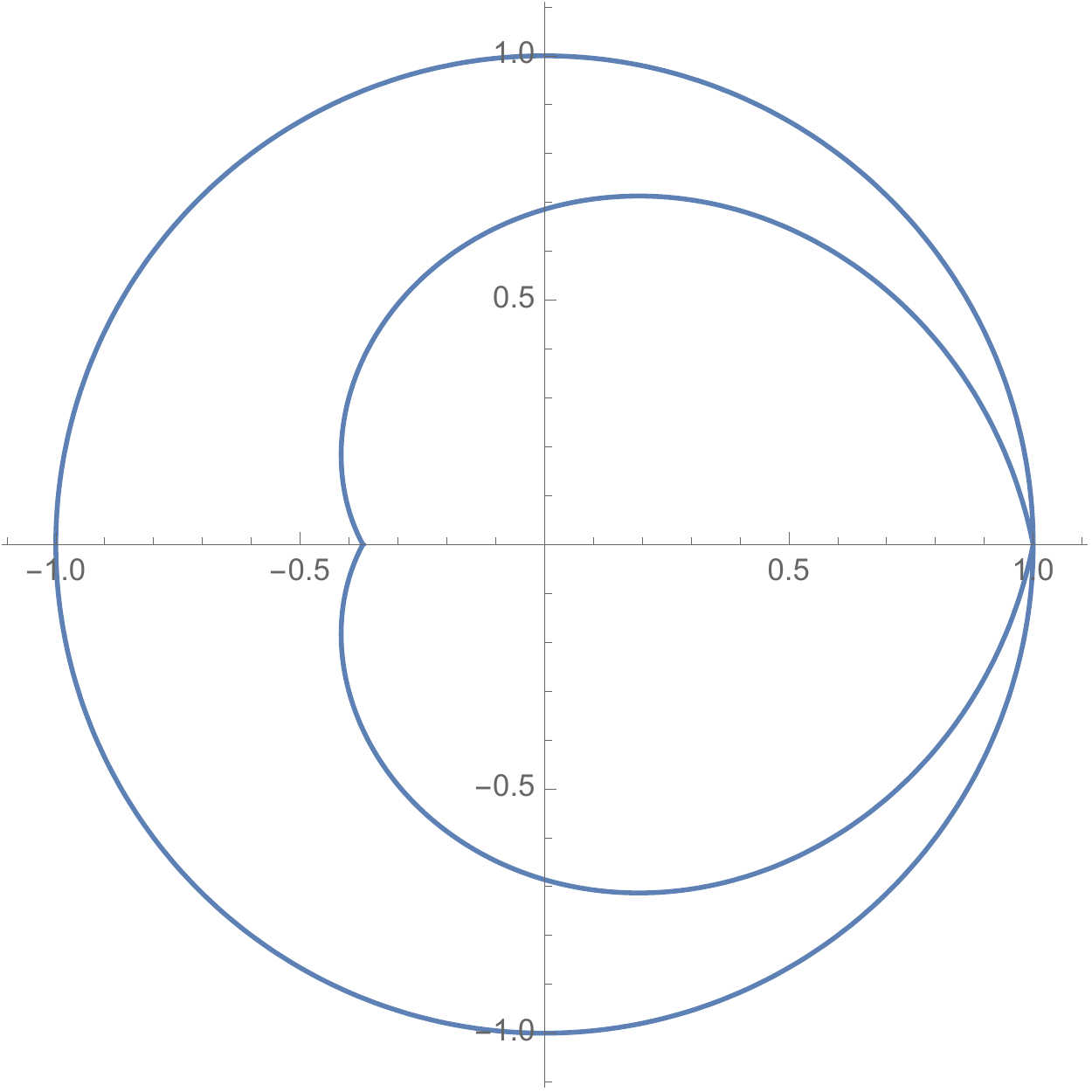}
\end{center}

\caption{The curve $\Gamma_m$ for $m=0.2$ and $| \theta | \in (0, \pi]$, contained in $\mathbb{D}$.}\label{gamma-fig}
\end{figure}

In the final step of the proof we will study the behavior of the curve $\Gamma_m$ (see Figure \ref{gamma-fig})
as a mapping from $R_{m_0}=[(0,m_0)\times (-\pi,\pi)]\setminus \{(m,0), 0<m<m_0\} $
onto the region $Q_{m_0}=\left( \mathbb{D} \setminus R_{m_0}\right) \setminus (-1, -1/2)$.
Observe that $\Gamma_m$ tends to $\partial \mathbb{D}$ as $m\to 0$. In order to make it clear, we write
$$\Gamma _m(\theta) = (1-m|\theta|)e^{i\theta}=re^{i\theta}.$$
Then $r=(1-m|\theta|)$  and the Jacobian $\left|\frac{D(r,\theta)}{D(m,\theta)}\right|=|\theta|$. Therefore, for the area $A$ of $f(\mathbb{D})$ we have that
\begin{eqnarray*}
A &=& \iint_{\mathbb{D}}(|h'(z)|^2 - |g'(z)|^2)\,dxdy > \iint_{R_{m_{0}}}  (|h'(z)|^2 - |g'(z)|^2) r\hspace{0.3mm}dr\hspace{0.3mm} d\theta \\
&=& \iint_{R_{m_{0}}} (|h'(z)|^2 - |g'(z)|^2)(1-m|\theta |)|\theta|\hspace{0.3mm}dm \hspace{0.3mm} d\theta \\
&>&\int_0^{m_0} (1-m \pi )\left( \int_{-\pi}^{\pi} (|h'(z)|^2 - |g'(z)|^2) |\theta|   d\theta \right)\hspace{0.3mm} dm = \infty,
\end{eqnarray*}
where the last equality follows from \eqref{prop1}.
This contradicts the initial assumption that $f$ is bounded, and hence, the theorem is proved.
\end{proof}

\begin{rem}
Let $\Omega $ be as above, $a_f:\mathbb{D} \to \mathbb{D}$ analytic
and $f$ be the solution of the equation $\overline{f_{\overline{z}}} = a_{f} f_{z}$
from $\mathbb{D}$ onto $\Omega$. If there exists an analytic function $F:\mathbb{D} \to \mathbb{D}$ such that $$\int_{\Gamma_{m,\zeta}} \frac{1-|F(z)|^2}{1-|z|^2} |dz|=\infty ,$$ for any $m>0$, and $a_f$ is majorized by $F$ (denoted by $a_f\ll F$) in $\mathbb{D}$,
then the boundary function of $f$ is continuous at $\zeta \in \mathbb{D}$.
\end{rem}
Indeed, majorization implies that there exists an analytic function $\phi :\mathbb{D} \to \mathbb{D}$ such that $a_f (z)=\phi (z)F(z) $ for all $z\in \mathbb{D}$.
Hence,
\begin{eqnarray*}
\int_{\Gamma_{m,\zeta}} \frac{1-|a_f(z)|^2}{1-|z|^2} |dz| &=& \int_{\Gamma_{m,\zeta}} \frac{1-|\phi(z)F(z)|^2}{1-|z|^2} |dz| \\
&\geq & \int_{\Gamma_{m,\zeta}} \frac{1-|F(z)|^2}{1-|z|^2} |dz| = \infty .
\end{eqnarray*}
The previous result concludes the proof.

\begin{example}
{If $a_{f}(z)=\alpha z$ with $|\alpha|<1$, then $\mathcal L (m)$ is infinite at every point, and hence,}
 the boundary function $f^*$ is a continuous function.
When $|\alpha |=1$ then $\mathcal L(m)$ is finite, in one case when $\Omega$ is a quadrilateral triangle, $f^*$ has jump continuity between every  two vertices and in another case when $\Omega$ is a strictly concave triangle, {\it i.e.} a triangle with three strictly concave sides with respect to its interior and three cusps of zero angles,  $f^*$ is continuous at every point.
\end{example}

Theorem \ref{theorem3} states that if $\mathcal L(m)=\infty$ for all accessible $m$ at some point $\zeta$ and $(1-|z|)|h'(z)|> C$
{with constant $ C>0$,}
then the area of $\Omega$ is infinite. In the next theorem we study the converse theorem.

\begin{thm}
 Suppose that $\Omega$ is a convex and simply connected domain. Let $a_f$ be an analytic function from $\mathbb{D}$ into itself and $f$ be the univalent solution of $\overline{f_{\overline{z}}}=a_f f_z$
 {from $\mathbb{D}$ onto $\Omega$.} 
 Fix $\zeta\in\partial\mathbb D$ where $f(\zeta)$ is finite, and assume that
$$ ( 1-|z|)|h'(z)|\leq H $$
in any compact set in $\mathbb D\cup \{\zeta\}$. Then
$$\mathrm{Area}\big(\Omega)<
{\frac{\pi H^2}{0.15}}
\int_0^{1/\pi} \mathcal L(m)dm.$$
\end{thm}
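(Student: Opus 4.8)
The plan is to run the estimates from the proof of Theorem~\ref{theorem3} in the reverse direction. After a rotation we may assume $\zeta=1$; write $f=h+\overline g$ and $J_f:=|h'|^2-|g'|^2=|h'|^2\bigl(1-|a_f|^2\bigr)$, so that $\mathrm{Area}(\Omega)=\iint_{\mathbb D}J_f\,dx\,dy$. Recall from that proof that $\Phi(m,\theta):=(1-m|\theta|)e^{i\theta}$ carries $(0,1/\pi)\times\bigl((-\pi,\pi)\setminus\{0\}\bigr)$ bijectively onto the region $Q:=\mathbb D\setminus\overline{R_{1/\pi}}$ swept out by the curves $\Gamma_m$, $0<m<1/\pi$, with Jacobian $|\theta|$, so that $r\,dr\,d\theta=(1-m|\theta|)|\theta|\,dm\,d\theta$ with $r=1-m|\theta|$. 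Hence
\[
\iint_{Q}J_f\,dx\,dy=\int_0^{1/\pi}\left(\int_{-\pi}^{\pi}J_f\bigl((1-m|\theta|)e^{i\theta}\bigr)\,(1-m|\theta|)\,|\theta|\,d\theta\right)dm .
\]

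I would then bound the inner integral against $\mathcal L(m)$. On $\Gamma_m$ one has $1-|z|=m|\theta|$, $1+|z|=2-m|\theta|\in[1,2]$, $|dz/d\theta|=\sqrt{m^2+(1-m|\theta|)^2}\ge 1-m|\theta|$, and the hypothesis gives $|h'(z)|^{-2}\ge(1-|z|)^2/H^2=(m|\theta|)^2/H^2$. Substituting these into
\[
\mathcal L(m)=\int_{-\pi}^{\pi}\frac{J_f\bigl((1-m|\theta|)e^{i\theta}\bigr)}{|h'|^2\,m|\theta|\,(2-m|\theta|)}\,\sqrt{m^2+(1-m|\theta|)^2}\;d\theta
\]
yields a lower bound of the form $\mathcal L(m)\ge\dfrac{c\,m}{H^2}\displaystyle\int_{-\pi}^{\pi}J_f\bigl((1-m|\theta|)e^{i\theta}\bigr)\,(1-m|\theta|)\,|\theta|\,d\theta$ for an explicit $c$; integrating over $m\in(0,1/\pi)$ and inserting this into the display above bounds $\iint_{Q}J_f\,dx\,dy$ by a constant multiple of $\int_0^{1/\pi}\mathcal L(m)\,dm$. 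This is the inequality chain in the proof of Theorem~\ref{theorem3} read backwards, and the quantity $\int_0^{1/\pi}(1-m\pi)\,dm=\tfrac{1}{2\pi}\approx0.159$, underestimated by $0.15$, is what fixes the numerical constant $\pi H^2/0.15$.

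There remains the complementary piece $\iint_{R_{1/\pi}}J_f\,dx\,dy=\mathrm{Area}\bigl(f(R_{1/\pi})\bigr)$: the image of the teardrop $R_{1/\pi}$, which the curves $\Gamma_m$, $0<m<1/\pi$, never enter. Here the key point is that $\overline{R_{1/\pi}}$ is a compact subset of $\mathbb D\cup\{\zeta\}$ — its only boundary point on $\partial\mathbb D$ is $\zeta$ — so that $(1-|z|)|h'(z)|\le H$ is available throughout $R_{1/\pi}$; I expect this to be combined with the convexity of $\Omega$ (which keeps $|h'|$ and $|g'|$ under control on the part of $R_{1/\pi}$ away from $\zeta$) and with the subharmonicity of $|a_f|^2$, which lets one propagate the smallness of $1-|a_f|^2$ measured by $\mathcal L(m)$ along $\Gamma_m$ as $m\uparrow1/\pi$ into the interior of $R_{1/\pi}$ near $\zeta$, so that this contribution is again controlled by $\int_0^{1/\pi}\mathcal L(m)\,dm$. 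Summing the two pieces gives $\mathrm{Area}(\Omega)<\frac{\pi H^2}{0.15}\int_0^{1/\pi}\mathcal L(m)\,dm$.

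The main obstacle is exactly this reconciliation. The integrand of $\mathcal L(m)$ carries only one power of $1-|z|$ in the denominator, whereas the area integrand effectively carries two once the hypothesis is used as $|h'|^2\le H^2/(1-|z|)^2$, so the crude term-by-term comparison loses a factor $\sim1/m$ as $m\to0$; and, separately, the curves $\Gamma_m$ with $0<m<1/\pi$ miss the teardrop at $\zeta$. Showing that both defects can nonetheless be absorbed into $\int_0^{1/\pi}\mathcal L(m)\,dm$ with the precise constant — the first by exploiting the full factor $\sqrt{m^2+(1-m|\theta|)^2}$ rather than $1-m|\theta|$ together with the convexity of $\Omega$, the second by the potential-theoretic transfer near $\zeta$ — is where the real work lies; everything else is bookkeeping of elementary constants.
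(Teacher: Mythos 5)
Your overall route is the paper's: pass to polar coordinates, substitute $r=1-m|\theta|$ with Jacobian $|\theta|$, and compare the integrand of $\mathcal{L}(m)$ with $J_f=|h'|^2-|g'|^2$ via $|h'(z)|^{-2}\ge (1-|z|)^2/H^2$. But as written this is a plan, not a proof: both ``defects'' you name at the end are genuine and are left unresolved. Concretely, the hypothesis yields $\mathcal{L}(m)\ge \frac{m}{2H^2}\,G(m)$ with $G(m)=\int_{-\pi}^{\pi}J_f\bigl((1-m|\theta|)e^{i\theta}\bigr)(1-m|\theta|)|\theta|\,d\theta$, whereas the piece of the area you are controlling is $\int_0^{1/\pi}G(m)\,dm$; there is no general inequality bounding $\int G\,dm$ by a constant times $\int mG\,dm$ (take $G(m)\sim m^{-3/2}$), so the lost factor of $m$ cannot be ``absorbed'' without real information on how the area of $\Omega$ accumulates near $\partial\mathbb{D}$. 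None of your proposed remedies (the full $\sqrt{m^2+(1-m|\theta|)^2}$, convexity of $\Omega$, subharmonicity of $|a_f|^2$) is carried out, and it is not clear any of them closes this gap, let alone with the constant $\pi H^2/0.15$. The teardrop is equally unresolved: the bound $|h'|^2\le H^2/(1-|z|)^2$ alone gives $\iint_{R_{1/\pi}}H^2(1-|z|)^{-2}\,dx\,dy=\infty$ because $R_{1/\pi}$ has width comparable to $1-r$ near $\zeta$, so that contribution genuinely needs input from $1-|a_f|^2$ in a region the curves $\Gamma_m$, $0<m<1/\pi$, never enter, and your ``potential-theoretic transfer'' is only a hope.

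You should know, however, that the paper's own proof is exactly the crude term-by-term comparison you reject. It establishes $\mathcal{L}(m)>\frac{0.3m}{2H^2}\int_{-\pi}^{\pi}J_f|\theta|\,d\theta$ with $z=(1-m|\theta|)e^{i\theta}$, then integrates in $m$ while treating the $\theta$-integral as a constant independent of $m$, and on the other side writes $\mathrm{Area}(\Omega)=\int_m\int_\theta J_f(1-m|\theta|)|\theta|\,dm\,d\theta<\frac{1}{\pi}\int_{-\pi}^{\pi}J_f|\theta|\,d\theta$, again extracting an $m$-dependent quantity from the $m$-integral and implicitly letting the chart $(m,\theta)\in(0,1/\pi)\times(-\pi,\pi)$ cover all of $\mathbb{D}$ (it misses $R_{1/\pi}$). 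So the two obstacles you isolate are present and unaddressed in the published argument as well. Two smaller points: the constant $0.15$ in the paper arises as $0.3/2$ (the lower bound on $|dz|/d\theta$ over the upper bound on $1+|z|$), not from $\int_0^{1/\pi}(1-m\pi)\,dm$ as you suggest; and even granting the paper's manipulations, $\int_0^{1/\pi}m\,dm=1/(2\pi^2)$ produces $0.075/(\pi^2H^2)$ rather than the claimed $0.15/(\pi^2H^2)$. In short, your proposal reproduces the paper's computation where it is sound and correctly diagnoses where it is not, but it does not supply the missing argument.
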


\begin{rem}
It follows immediately that if $\mathrm{Area}(\Omega)=\infty$, then at least for $m\in E_\zeta$, where $E_\zeta$  a set of positive measure, $\mathcal L(m)=\infty$ at $\zeta.$ 
\end{rem}

\begin{proof}
Without loss of generality we can assume that $\zeta=1.$ Let $0<m<\frac1\pi.$ Then on $\Gamma_m$ we have $\frac{|dz|}{d\theta}=|m +i(1-m|\theta|)|>0.3 $ so that
\begin{eqnarray*}
\mathcal L(m) =\int_{\Gamma _{m}}\frac{1-|a_f(z)|^2}{1-|z|^2}|dz| &=& \int_{\Gamma _{m}} \frac{(|h'(z)|^2-|g'(z)|^2)(1-|z|)}{|h'(z)|^2(1-|z|)^2(1+|z|)}|dz| \nonumber \\
&\geq & \int_{\Gamma _{m}}\frac{  (|h'(z)|^2-|g'(z)|^2)m|\theta |}{2H^2}|dz| \nonumber \\
&>& \frac{0.3m}{2 H^2} \int_{-\pi}^\pi (|h'(z)|^2-|g'(z)|^2) |\theta| d\theta.
\end{eqnarray*}
Thus
$$\int_0^{1/\pi} \mathcal L(m)dm\geq \frac{0.15}{\pi^2H^2}\int_{-\pi}^\pi (|h'(z)|^2-|g'(z)|^2) |\theta| d\theta.$$

On the other hand,
\begin{eqnarray*}
{\mathrm{Area}\big(\Omega)}
&=& \iint_{\mathbb{D}}(|h'(z)|^2 - |g'(z)|^2)\,dxdy = \iint_{\mathbb D  }  (|h'(z)|^2 - |g'(z)|^2) r\hspace{0.3mm}dr\hspace{0.3mm} d\theta \\
&=& \int_m\int_\theta (|h'(z)|^2 - |g'(z)|^2)(1-m|\theta |)|\theta|\hspace{0.3mm}dm \hspace{0.3mm} d\theta \\
&<&
{\int_0^{1/\pi}  \int_{-\pi}^{\pi} (|h'(z)|^2 - |g'(z)|^2) |\theta|   d\theta } \hspace{0.3mm} dm \\
&=&
{\frac{1}{\pi}}
\int_{-\pi}^{\pi} (|h'(z)|^2 - |g'(z)|^2) |\theta|   d\theta \hspace{0.3mm}.
\end{eqnarray*}
Therefore,
$$\mathrm{Area}\big(\Omega)<
\frac{\pi H^2}{0.15}
\int_0^{1/\pi} \mathcal L(m)dm,$$
{completing the proof.}
\end{proof}


\begin{thebibliography}{99}

\bibitem{AVV} G. Anderson, M. Vamanamurthy, and M. Vuorinen,
{\it Conformal invariants, inequalities, and quasiconformal maps},
Wiley, New York, 1997.

\bibitem{BM} A. Beardon and D. Minda,
The hyperbolic metric and geometric function theory,
\textit{Quasiconformal mappings and their applications}, 9-56, Narosa, New Delhi, 2007.

\bibitem{BLW} D. Bshouty, A. Lyzzaik, and A. Weitsman,
 On the boundary behaviour of univalent harmonic mappings,
 \textit{Ann. Acad. Sci. Fenn. Math.}, \textbf{37} (2012), 135-147.

\bibitem{cl} J. Clunie and T. Sheil-Small,
Harmonic univalent functions,
\textit{Ann. Acad. Sci. Fenn. Ser. A I Math.}, {\bf 9} (1984), 3-25.

\bibitem{du} P. Duren,
{\it Harmonic Mappings in the Plane},
Cambridge University Press, Cambridge, 2004.

\bibitem{Ge} F. Gehring,
The Carath\'eodory convergence theorem for quasiconformal mappings in space,
\textit{Ann. Acad. Sci. Fenn. Ser. A I}, \textbf{336/11} (1963), 21 pp.

\bibitem{HS} W. Hengartner and G. Schober,
Harmonic mappings with given dilatation,
\textit{J. London Math. Soc.}, {\bf 33} (1986), 473-483.

\bibitem{Ko} P. Koebe,
Abhandlungen zur Theorie der konformen Abbildung. I. Die Kreisabbildung des allgemeinsten einfach und zweifach zusammenh\"{a}ngenden schlichten Bereichs und die R\"{a}nderzuordnung bei konformer Abbildung,
 \textit{J. Reine Angew. Math.}, \textbf{145} (1915), 47-65.

\bibitem{LV}  O. Lehto and K. Virtanen,
{\it Quasiconformal mappings in the plane},
Springer-Verlag, New York-Heidelberg, 1973.

\bibitem{PR} S. Ponnusamy and A. Rasila,
On zeros and boundary behavior of bounded harmonic functions,
\textit{Analysis (Munich)}, {\bf 30} (2010), 199-207.

\bibitem{Ra} A. Rasila,
Multiplicity and boundary behavior of quasiregular maps,
\textit{Math. Z.}, {\bf 250} (2005), 611-640.

\bibitem{Ru} W. Rudin,
{\it Real and Complex Analysis,
Third edition}, McGraw-Hill Book Co., New York, 1987.

\bibitem{Run} D. Rung,
Behavior of holomorphic functions in the unit disk on arcs of positive hyperbolic diameter,
\textit{J. Math. Kyoto Univ.}, \textbf{8} (1968), 417-464.

\bibitem{Vai} J. V\"{a}is\"{a}l\"{a},
{\it Lectures on $n$-dimensional quasiconformal mappings},
Springer-Verlag, Berlin-New York, 1971.

\bibitem{Vas} A. Vasil'ev,
{\it Moduli of families of curves for conformal and quasiconformal mappings},
 Springer-Verlag, Berlin, 2002.

\bibitem{Vuo} M. Vuorinen,
{\it Conformal geometry and quasiregular mappings},
Springer-Verlag, Berlin, 1988.

\end{thebibliography}
\end{document}